\numberwithin{equation}{section}
\newtheorem{teo}{Theorem}[section]
\newtheorem*{inverse}{Inverse Sieve Problem}
\newtheorem{prop}[teo]{Proposition}
\newtheorem{lema}[teo]{Lemma}
\newtheorem{conj}[teo]{Conjecture}
\newtheorem{coro}[teo]{Corollary}
\theoremstyle{definition}
\newtheorem{ej}[teo]{Example}
\newtheorem{defi}[teo]{Definition}
\title{The inverse sieve problem in high dimensions}
\author{Miguel N. Walsh}
\address{Departamento de Matemática, Facultad de Ciencias Exactas y Naturales, Universidad de Buenos Aires, 1428 Buenos Aires, Argentina}
\email{mwalsh@dm.uba.ar}
\thanks{The author was partially supported by a CONICET doctoral fellowship.}
\begin{document}
\maketitle

\begin{abstract}
We show that if a big set of integer points $S \subseteq [0,N]^d$, $d>1$, occupies few residue classes mod $p$ for many primes $p$, then it must essentially lie in the solution set of some polynomial equation of low degree. This answers a question of Helfgott and Venkatesh.
\end{abstract}

\bigskip

\section{Introduction}

One of the main topics of study in analytic number theory is the distribution of sets of integers in residue classes. Examples abound, but folkloric ones include Dirichlet's theorem, which tells us that the primes are uniformly distributed along primitive residue classes, and the open problem of determining how large may the least quadratic non-residue be.

\medskip
On the other hand, in the expanding subject of arithmetic combinatorics, much of the focus has been in establishing what is known as {\sl inverse theorems} in which one starts with a set having a specific arithmetic property and wishes to use this information to give a characterization of the set. Notable examples of this include Freiman type theorems (see for instance \cite{GR,Helfgott,Tao} and the survey \cite{Green survey}), inverse theorems for the Gowers norm \cite{GTZ} and the inverse Littlewood-Offord theory \cite{TV book,TV}.

\medskip
This paper is concerned with the problem of connecting both lines of inquiry by establishing an inverse theorem for the distribution of sets in residue classes.  Since we would expect a random set to be fairly well distributed, the main question here is whether a set occupying very few residue classes for many primes $p$ has to have some specific structure.  The remarkable observation that this might indeed be the case is due to Croot and Elsholtz \cite{CL} and Helfgott and Venkatesh \cite{HV}. Writing $[N]$ for the set of integers $\left\{0,\ldots,N \right\}$ their observation can be resumed in the following principle:

\begin{inverse}
Suppose a set $S \subseteq [N]^d$ occupies very few residue classes mod $p$ for many primes $p$. Then, either $S$ is small, or it possesses some strong algebraic structure.
\end{inverse}

There is a good reason why such inverse sieve results are of much interest in number theory. One of the main features of sieve theory is the uniformity of its results, which is a consequence of the fact that sieves only take into account the cardinality of the classes occupied by the set. However, a clear drawback of this is that the bounds thus obtained are limited to what happens in extremal cases. By stating that such extremal sets must have a very specific structure, inverse results should allow one to retain the uniformity of the sieve while providing much stronger bounds. The reader may consult the book of Kowalski \cite[\S 2.5]{Kowalski book} for further discussion of the potential applications of this phenomenon and \cite{Green survey} for applications of similar classifications in arithmetic combinatorics.

\medskip
In this paper we give a satisfactory answer to the inverse sieve problem for every $d \ge 2$. In order to discuss our results suppose we are given a big integer set $S \subseteq [N]^d$ occupying $O(p^{d-1})$ residue classes in $(\mathbb{Z}/p \mathbb{Z})^d$ for many primes $p$. What does this imply about $S$? By the Lang-Weil inequality, we know that this condition is satisfied by the set of integer points of a proper algebraic variety of small degree and one would expect a partial converse to also hold. That is, that any big set $S \subseteq [N]^d$ occupying only that many residue classes for every prime $p$ should essentially be contained inside the solution set of a polynomial of low degree. When $d=1$ this follows from Gallagher's larger sieve \cite{Gallagher} (not to be confused with the conjecture discussed in \S \ref{d=1}). The case $d=2$ was proven by Helfgott and Venkatesh in \cite{HV}, by applying the Bombieri-Pila determinant method \cite{BP} to obtain a two-dimensional generalization of the larger sieve. Although their methods are only capable of handling the case $d \le 2$, they conjectured that such an inverse theorem should in fact hold for every dimension $d$. In this paper we introduce a different approach and use it to answer this question by giving the following best possible result.

\begin{teo}
\label{principal}
Let $0 \le k < d$ be integers and let $\varepsilon,\alpha, \eta > 0$ be positive real numbers. Then, there exists a constant $C$ depending only on the above parameters, such that for any set $S \subseteq [N]^d$ occupying less than $\alpha p^k$ residue classes for every prime $p$ at least one of the following holds: 
\begin{enumerate}[(i)]
	\item ($S$ is small) $\left|S \right| \ll_{d,k,\varepsilon,\alpha} N^{k-1+\varepsilon}$,
	\item ($S$ is strongly algebraic) There exists a polynomial $f \in \mathbb{Z}[x_1, \ldots, x_d]$ of degree at most $C$ and coefficients bounded by $N^C$ vanishing at more than $(1-\eta) |S|$ points of $S$.
\end{enumerate}
\end{teo}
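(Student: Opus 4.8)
The plan is to combine the polynomial method with the geometry of numbers and an induction on the dimension, sidestepping the Bombieri--Pila determinant method that confines the argument of Helfgott and Venkatesh to $d\le 2$. The input is that for each prime $p$ the reduction $\bar S_p\subseteq(\mathbb Z/p\mathbb Z)^d$ has at most $\alpha p^k$ points, so, since the space of polynomials of degree $\le D$ in $d$ variables has dimension $\binom{D+d}{d}$, a dimension count produces a nonzero polynomial over $\mathbb F_p$ of degree $O_d\bigl((\alpha p^k)^{1/d}\bigr)$ vanishing on $\bar S_p$. First I would fix a degree parameter $D$ and a threshold $Q$ with $\binom{D+d}{d}>\alpha Q^k$ and consider the integer lattice $\Lambda\subseteq\mathbb Z^{\binom{D+d}{d}}$ of polynomials of degree $\le D$ vanishing modulo $p$ on $\bar S_p$ for every prime $p\le Q$. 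Its index in $\mathbb Z^{\binom{D+d}{d}}$ is at most $\prod_{p\le Q}p^{\alpha p^k}$, so Minkowski's theorem yields a nonzero $f\in\mathbb Z[x_1,\dots,x_d]$ of degree $\le D$ with coefficients of size $\exp\!\bigl(O(\alpha Q^{k+1}/\binom{D+d}{d})\bigr)$. For $s\in S$ the value $f(s)$ is divisible by $\prod_{p\le Q}p\approx e^{Q}$, while $|f(s)|$ is at most a binomial factor times (coefficient size)$\cdot N^{D}$; balancing the parameters, which forces $D$ to be of size roughly $(\log N)^{k/(d-k)}$, gives $f(s)=0$ for every $s\in S$. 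Thus, using only small primes, $S$ is placed on a hypersurface, albeit one of a degree that grows slowly with $N$. It is here that the hypothesis ``for every prime $p$'' first enters, and it enters again to ensure the sieve condition survives restriction to subvarieties.

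The second ingredient is an induction on $d$, the base case $d=1$ (hence $k=0$) being Gallagher's larger sieve. For the inductive step one decomposes the hypersurface from Step 1 into its irreducible components, of which there are at most its degree, so some component $W$ of dimension $d-1$ carries $\gg|S|/\deg V$ points of $S$, and $S\cap W$ still occupies at most $\alpha p^k$ classes for every $p$. When $k\le d-2$ one chooses $d-1$ coordinates making the projection $\pi\colon W\to\mathbb A^{d-1}$ dominant and generically finite of degree $\le\deg W$, pushes $S\cap W$ forward to $S'\subseteq[N]^{d-1}$, which occupies $\le\alpha' p^k$ classes with $\alpha'$ controlled by $\deg\pi$ and has $|S'|\gg|S\cap W|/\deg\pi$, and applies the theorem in dimension $d-1$: in the small case, pulling $S'$ back through $\pi$ costs only the factors $\deg\pi$ and (number of components), which are absorbed into $\varepsilon$, giving $|S|\ll N^{k-1+\varepsilon}$; in the algebraic case, the $\pi$-preimage of the bounded-degree hypersurface containing most of $S'$ is a subvariety of $\mathbb A^d$ meeting a proportion $1-\eta$ of $S$. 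The remaining case $k=d-1$, which already contains the Helfgott--Venkatesh result as its $d=2$ instance, is the delicate one: here $\dim W=k$, projecting to $\mathbb A^{d-1}$ destroys the sieve information, and one must instead argue directly on $W$, exploiting that $S\cap W$ occupies at most $\alpha p^{d-1}$ of the $\asymp(\deg W)p^{d-1}$ residue points of $W$, i.e.\ a small fraction once $\deg W$ is large.

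The principal obstacle, and where the new method must do its real work, is the gap between the ``degree $(\log N)^{O(1)}$'' produced in Step 1 and the ``degree bounded independently of $N$'' demanded by conclusion (ii). An unbounded-degree ambient variety is fatal for the induction: both the degree of the variety produced in (ii) and the admissible value of $\eta$, which is governed by $\deg\pi\le\deg W$, would degrade with $N$. The way around this is to iterate rather than recurse blindly: having placed $S$ on a hypersurface $V$, one re-runs the polynomial-method step so as either to confine $S$ to a subvariety of $V$ of strictly smaller dimension at the cost of only a bounded increment in degree, or to reach the sparse regime $|S|\ll N^{k-1+\varepsilon}$. Since the dimension can drop at most $d$ times, the procedure halts after $O_{d,k}(1)$ steps with a hypersurface of bounded degree, after which the coefficient bound $N^{C}$ in (ii) follows by clearing denominators and estimating heights. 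I expect the crux to be exactly the requirement that each step cost only $O_{d,k,\varepsilon,\alpha,\eta}(1)$ in the degree, rather than multiplying degrees as the naive recursion does; it is precisely here that one must use, beyond any single prime, that $S$ occupies few classes for a whole range of primes, so that the relevant polynomial lattice has covolume small enough for a bounded-degree short vector to exist.
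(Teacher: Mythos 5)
Your Step 1 is fine as far as it goes: combining the dimension count over $\mathbb{F}_p$ for all $p\le Q$ with Minkowski's theorem does place \emph{all} of $S$ on an integral hypersurface $V$ of degree $D\asymp(\log N)^{k/(d-k)}$. But that is a weaker statement than conclusion (ii), which demands a polynomial of degree $O_{d,k,\varepsilon,\alpha,\eta}(1)$ vanishing on a $(1-\eta)$-fraction of $S$, and the passage from the first to the second is exactly where your proposal stops short. The difficulty is structural, not merely technical: $V$ has up to $D$ irreducible components, each of which may carry only $|S|/D\to 0$ proportion of $S$, so no single bounded-degree component captures a positive fraction; and in your inductive step the generically-finite projection $\pi:W\to\mathbb{A}^{d-1}$ has $\deg\pi\le\deg W\le D$, so pulling back the bounded-degree hypersurface obtained in dimension $d-1$ yields a variety in $\mathbb{A}^d$ of degree $O(1)\cdot D$, which is again unbounded. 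The proposed remedy --- ``re-run the polynomial-method step so as to confine $S$ to a subvariety of $V$ of strictly smaller dimension at the cost of only a bounded increment in degree'' --- is not a mechanism but a restatement of the goal: the Minkowski argument, applied a second time, simply reproduces (a multiple of) the same polynomial, since nothing in the lattice construction prevents the new polynomial from lying in the ideal of $V$. You have correctly identified this as the crux, but the proposal leaves it unsolved.

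The paper avoids the unbounded-degree hypersurface entirely by reversing the order of quantifiers. Rather than first producing one polynomial vanishing on all of $S$ and then trying to cut its degree, Proposition~\ref{control} produces, for each $r$, a small \emph{characteristic} subset $A\subseteq S$ of size $O(r^{k})$ (not $O(r^{d})$) with the property that \emph{any} polynomial of size $<N^{3r}$ vanishing on $A$ automatically vanishes on a positive proportion of $S$. The mechanism is divisibility: by the larger-sieve ``genericity'' lemma (Lemma~\ref{genericity}) and Lemma~\ref{intermedio}, a bounded collection $\mathcal{L}$ of sections of $S$ can be chosen so that a typical $s\in S$ is congruent mod $p$ to a point of $\mathcal{L}$ for a set of primes $p$ of logarithmic weight $\ge 3r\log N$; then $\prod p>N^{3r}>|f(s)|$ forces $f(s)=0$. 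Since a degree-$r$ polynomial in $d$ variables has $\sim(r/d)^{d}$ coefficients and $|A|=O_\eta(r^{k})$ with $k<d$, Siegel's lemma yields a nonzero polynomial of bounded degree $r=O_\eta(1)$ and coefficients $\le N^{r}$ vanishing on $A$, and iterating the characteristic-set construction $O_\eta(1)$ times lifts ``positive proportion'' to ``$(1-\eta)|S|$''. It is precisely the exponent $k$ rather than $d$ in the size of $A$ --- obtained by inducting on $d$ with $h=d-k$ fixed while exploiting few-classes-per-prime through coincidences mod $p$ rather than through a single large Vandermonde system --- that makes the degree bounded, and this is the idea missing from your proposal.
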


Theorem \ref{principal} is sharp. Indeed, the reader may consult Section \S \ref{thin} for examples of sets of size $|S| \gg N^{k-1}$ occupying less than $p^k$ residue classes for every prime $p$ but possessing no algebraic structure. On the other hand, we only need to require from $S$ that it occupies few residue classes for sufficiently many small primes (see Theorem \ref{refinement}). More generally, we will show in Theorem \ref{regular teo} that assuming some necessary regularity conditions, every set of size $\gg N^{\varepsilon}$ occupying few residue classes for many primes $p$ must satisfy condition (ii). In Section \S \ref{Approximate reduction} we shall give an easy application of this generalization to the characterization of functions preserving some structure when reduced to prime moduli.

\medskip
Taking $d=2$ in Theorem \ref{principal} we recover the result of \cite{HV}. Actually, the methods of Helfgott and Venkatesh are capable of handling the case $k=1$ of Theorem \ref{principal}, that is, when $S$ is assumed to occupy only $O(p)$ residue classes. However, the approach fails as soon as the set occupies more than $p \log p$  classes. The reason for this is that their method, as well as the larger sieve itself, is in essence a counting argument (see \S \ref{gallagher sieve}) and therefore needs the \emph{number} of classes occupied by $S$ to be small, while the high dimensional setting requires us to take advantage of the local \emph{density} of $S$ being small. This type of obstacle is not specific to the problem at hand, but arises whenever one tries to extend this kind of sieves to higher dimensional settings (see \cite[Remark 3]{Kowalski} for some discussion). So while we do make use of the larger sieve, in order to establish Theorem \ref{principal} we need to introduce an approach that overcomes this difficulty by taking advantage of the structure of the set and which we believe to be applicable in more general situations.

\medskip
The rest of the paper is organized as follows. After setting up some notation, in Section \S \ref{cond} we state and discuss Proposition \ref{control}, which is the main ingredient of the paper, and use it to deduce Theorem \ref{principal}. This proposition says that while a polynomial identity of degree $r$ usually requires $O(r^d)$ points to be tested, if one is interested in sets satisfying hypothesis similar to those of Theorem \ref{principal} then one can verify such identities for a positive proportion of the set using only $O(r^k)$ points. Then, in Section \S \ref{applying ls}, we review some facts about the larger sieve and apply them to obtain a key uniformization lemma. Using this, the proof of Proposition \ref{control} is carried out in Section \S \ref{proof control}. Finally, in \S \ref{thin} we construct several examples showing that our results are sharp, while in \S \ref{Further} we discuss further consequences of our methods as well as the remaining case ($d=1$) of the inverse sieve problem.

\medskip
\noindent \emph{Acknowledgment.} The author would like to thank his advisor Román Sasyk for several comments and suggestions during the preparation of the paper.

\section{A conditional proof of Theorem 1.1}
\label{cond}

\subsection{General notation}
\label{notation}

We now fix some notation. By $O_{c_1, \ldots, c_k}(X)$ we shall mean a quantity which is bounded by $C_{c_1,\ldots,c_k}X$ where $C_{c_1,\ldots, c_k}$ is some finite positive constant depending on $c_1, \ldots, c_k$. Also, we shall write $Y \ll_{c_1, \ldots, c_k} X$ to mean $|Y|=O_{c_1,\ldots,c_k}(X)$. However, since we will generally be concerned with the study of a set $S$ satisfying the hypothesis of Proposition \ref{control} for some parameters $d,h,\kappa$ and $\varepsilon$ as in the statement of that proposition, we will free up some notation by assuming that all implied constants in the $O,\ll$ notation \emph{always} depend on these parameters even though this may not be explicitly stated. So for instance $Y \ll_{\eta} X$ stands for $Y \ll_{\eta,d,h,\kappa,\varepsilon} X$. Throughout the paper we will let the letter $c$ denote a small positive constant whose exact value may vary at each occurrence.

\medskip
Given a statement $\phi(x)$ with respect to an element $x \in [N]^d$ we will write ${\bf 1}_{\phi(x)}$ for the function which equals $1$ if $\phi(x)$ is true and $0$ otherwise. Also, we shall write $\pi_i:\mathbb{Z}^d \rightarrow \mathbb{Z}$, $1 \le i \le d$, for the projection to the $i$th coordinate.

\medskip
The letter $p$ will always refer to a prime number. We write $\mathcal{P}$ for the set of primes and given any magnitude $Q$, we denote $\mathcal{P}(Q)$ the set of primes $p \le Q$. Since we will usually need to consider the weight $\frac{\log p}{p}$ over $\mathcal{P}$, for a finite subset $P \subseteq \mathcal{P}$ we write $w(P):= \sum_{p \in P} \frac{\log p}{p}$. We shall use the estimates $w(\mathcal{P}(Q))=\log Q + O(1)$ and $\sum_{p \in \mathcal{P}(Q)} \log p \sim Q$ without explicit mention.

\subsection{Characteristic sets}
\label{conditional}

The purpose of this section is to state Proposition \ref{control} which is the key ingredient of the paper and use it to derive Theorem \ref{principal}. What this Proposition essentially says, is that for any ill-distributed set $S$ as in the statement of Theorem \ref{principal}, one may find a very small ``characteristic" subset $A \subseteq S$ such that if a small polynomial vanishes at $A$ then it also vanishes at a positive proportion of $S$. The task of proving Theorem \ref{principal} is thus reduced to that of finding a polynomial which vanishes at $A$, and this will always be possible since $A$ is small.

\medskip
Before proceeding, we need to define exactly what we mean for a polynomial to be small. Given a parameter $N$ and some integer $d>0$ by an $r$-polynomial, for a positive integer $r$, we shall mean any polynomial $f$ with integer coefficients satisfying $|f(n)|<N^{3r}$ for every $n \in [N]^d$. The exponent $3r$ is chosen in order to guarantee that if $N$ is sufficiently large in terms of $r$ and $d$, then a polynomial $f \in \mathbb{Z}[x_1,\ldots,x_d]$ of degree at most $r$, with coefficients bounded in absolute value by $N^r$, is an $r$-polynomial. This leads us to the following definition.

\begin{defi}
Let $0 < \delta \le 1$ be a positive real number and $r>0$ some integer. We say a subset $A$ of a set $S$ is $(r,\delta)$-characteristic for $S$ if we can find some subset $A \subseteq B \subseteq S$ of size $|B| \ge \delta |S|$ such that whenever an $r$-polynomial vanishes at $A$, then it also vanishes at $B$.
\end{defi}

We can now state Proposition \ref{control} which says that ill-distributed sets always admit characteristic subsets.

\begin{prop}
\label{control}
Let $d,h \ge 1$ be arbitrary integers and $\varepsilon > 0$ some positive real number. Set $Q=N^{\frac{\varepsilon}{2d}}$ and let $P \subseteq \mathcal{P}(Q)$ satisfy $w(P) \ge \kappa \log Q$ for some $\kappa>0$. Also, let $r$ be an arbitrary positive integer. Suppose $S \subseteq [N]^d$ is a set of size $|S| \gg N^{d-h-1+\varepsilon}$ occupying at most $\alpha p^{d-h}$ residue classes mod $p$ for every prime $p \in P$ and some $\alpha > 0$. Then, if $N$ is sufficiently large, there exists a set $A \subseteq S$ of size $|A|=O(r^{d-h})$ which is $(r, \delta)$-characteristic for $S$, for some $\delta \gg 1$ independent of $S$ or $r$.
\end{prop}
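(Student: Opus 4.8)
The plan is to argue by induction on the dimension $d$, the engine being a projection onto one fewer coordinate combined with Gallagher's larger sieve. Write $k=d-h$ throughout, so that we are given $|S|\gg N^{k-1+\varepsilon}$, that $S$ occupies at most $\alpha p^{k}$ residue classes modulo every $p\in P$, and we must produce an $(r,\delta)$-characteristic set of size $O(r^{k})$. For the base case $d=1$ (hence $k=0$) the set $S\subseteq[N]$ occupies $O_{\alpha}(1)$ classes modulo each $p\in P$, and the weight hypothesis $w(P)\ge\kappa\log Q$ forces $\sum_{p\in P}\log p$ to be at least a fixed positive power of $N$, in particular far larger than $\log N$; Gallagher's larger sieve then gives $|S|=O_{\alpha,\kappa,\varepsilon}(1)$, so we may simply take $A=B=S$, a set of size $O(1)=O(r^{0})$ which is trivially characteristic. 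The same projection-to-each-coordinate argument shows $|S|=O(1)$ whenever $k=0$ in any dimension, which disposes of that case as well.

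For the inductive step, with $d\ge 2$ and $k\ge 1$, I would first establish, using the larger sieve, a uniformization lemma: after passing to a subset of $S$ of size $\gg|S|$ and relabelling coordinates, one may assume that the fibres of the projection $\pi=(\pi_1,\dots,\pi_{d-1})\colon S\to[N]^{d-1}$ have comparable cardinality and that the numbers of classes occupied by $S$ and by $\pi(S)$ modulo $p$ are, for a $w$-heavy set of $p\in P$, essentially stable in $p$. One then splits into two regimes. If $\pi(S)$ occupies $\ll p^{k-1}$ classes modulo many $p$, apply the inductive hypothesis in dimension $d-1$ to $\pi(S)$ (whose size is still $\gg N^{(k-1)-1+\varepsilon}$), obtaining a characteristic set $A_0\subseteq\pi(S)$ of size $O(r^{k-1})$, and then enlarge it to a characteristic set for $S$ of size $O(r^{k-1})\cdot O(r)=O(r^{k})$ by adjoining, over a bounded set of base points, enough well-separated points in the fibre direction. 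If instead $\pi(S)$ occupies $\gg p^{k}$ classes modulo many $p$, then the mod-$p$ fibres of $\pi$ have $O(1)$ average size, so the base case applied fibrewise shows that a positive proportion of $S$ lies in fibres of bounded cardinality; projecting that part, the image occupies $\ll p^{k}$ classes in dimension $d-1$, the inductive hypothesis supplies a characteristic set there, and lifting it through the bounded fibres again preserves the bound $O(r^{k})$.

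The vertical extension used in both regimes rests on a rigidity property of $r$-polynomials, which is the reason for the factor $3r$ in their definition: if $g\in\mathbb{Z}[x]$ satisfies $|g(n)|<N^{3r}$ for $n\in\{0,\dots,M\}$ with $M\gg N^{c}$, and $g$ vanishes at a suitable collection of $O_{c}(r)$ well-separated points of $\{0,\dots,M\}$, then $g$ vanishes on all of $\{0,\dots,M\}$ — proved by dividing out the known (monic, hence integral) linear factors and comparing sizes, the product of the distances to the roots exceeding $N^{3r}$. Pulled back along bounded-degree rational curves inside $[N]^{d}$, this is what allows $O(r^{k})$ points to witness the vanishing of an \emph{arbitrary} $r$-polynomial on a $k$-dimensional piece. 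I expect the main obstacle to be the uniformization step and its bookkeeping: one must extract from $w(P)\ge\kappa\log Q$ a single large subset of $S$ on which fibre sizes and local densities are simultaneously regular, and one must then check that the proportion $\delta\gg1$ coming out of the two-regime induction remains bounded below independently of $r$ (and of $S$) even after the $h$ iterations of the descent, which is delicate precisely because the losses are incurred fibrewise at each stage. A secondary point requiring care is that $r$-polynomials do not form a vector space, so the characteristic property must be verified through the quantitative rigidity lemma above rather than by a naive dimension count, the latter of which would only yield a characteristic set of size $O(r^{d})$.
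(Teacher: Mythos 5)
Your high-level strategy (fix $h$, induct on $d$, dispose of the base case via Gallagher's sieve, reduce to a dense regular subset via a uniformization step) does match the paper, but the mechanism you rely on to propagate vanishing from a small test set to a positive proportion of $S$ is different from the paper's and, as stated, does not work. The paper's propagation step never involves any ``rigidity of $r$-polynomials along fibres''; instead it exploits divisibility through residue classes: if an $r$-polynomial $f$ vanishes at $y$ and $x\equiv y\ (\mathrm{mod}\ p)$ then $p\mid f(x)$, and the ill-distribution hypothesis (packaged via the genericity Lemma \ref{genericity} and the key Lemma \ref{intermedio}) is used to show that for a positive proportion of $x\in S$ the total $\sum\log p$ over primes $p$ for which $x$ shares a residue class with some point of a small collection $\mathcal{L}$ of sections exceeds $3r\log N$; since $|f(x)|<N^{3r}$, this forces $f(x)=0$. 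This is precisely what lets the paper beat the determinant-method barrier at $k\ge 2$ that is discussed in the introduction.

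The concrete gap in your argument is the vertical step. You project $\pi=(\pi_1,\dots,\pi_{d-1})\colon S\to[N]^{d-1}$ and then want to verify vanishing on the $1$-dimensional fibres $\pi^{-1}(y)\cap S$ by picking $O(r)$ well-separated points in each fibre and invoking the factorization bound $|g(x_0)|\ge\prod|x_0-a_i|$. For that to beat $N^{3r}$ you need fibres of size $\gg N^c$ containing $\gg r$ points spread over an interval of length $\gg N^c$. But the hypotheses allow $S$ to be the graph of an arbitrary function over a set $X\subseteq[N]^{d-1}$ (e.g.\ $X=[N]^{k}\times\{c_1\}\times\cdots\times\{c_{d-1-k}\}$ with a generic $\phi\colon X\to[N]$), in which case every fibre of $\pi$ is a singleton and your rigidity step is vacuous; moreover, in that graph case one cannot reduce to the base $\pi(S)$ either, since the restriction of a $d$-variable $r$-polynomial to the graph need not be a $(d-1)$-variable $r$-polynomial, so the characteristic property of $A_0\subseteq\pi(S)$ does not lift. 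A related difficulty is that your ``regime 2'' wants to re-run the induction with $h$ replaced by $h-1$, which leaves the original base $h$ (and in particular the codimension-one case $h=1$, i.e.\ $k=d-1$) outside the inductive scheme. In contrast the paper sections $S$ by fixing the \emph{first} coordinate, so that the pieces $S_x=\pi_1^{-1}(x)\cap S$ are genuine $(d-1)$-dimensional sets occupying at most $Bp^{d-h-1}$ classes (with $h$ unchanged), and the propagation across different sections is accomplished purely $p$-adically through Lemma \ref{intermedio} rather than through real-variable size estimates on fibres.
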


\noindent \emph{Remarks.} The exact value of $Q$ in the above statement is irrelevant and may be replaced by any small power of $N$. The reason why we have made the change of variables $h:=d-k$ with respect to Theorem \ref{principal} is that in the arguments to follow we shall always set the quantity $h$ to be fixed and induct on $d$. We believe it is simpler to introduce this change of notation at an early stage.

\medskip
To see why such a result might be expected consider some polynomial $f$ vanishing at an integer point $x$. Since polynomials descend to congruence classes, this means that for any other integer $y$ satisfying $y \equiv x (\text{mod }p)$ for a prime $p$, we will have $p|f(y)$. Thus, if we are given a set $S$ which occupies very few residue classes, one may then hope to find a small subset $A$ such that given some $y \in S$ there are a lot of primes $p$ for which $y \equiv x (\text{mod }p)$ for some $x \in A$. It would then follow that if a polynomial vanishes at $A$ then there would be many primes $p$ dividing $f(y)$. If furthermore $f$ is small, then this can  only hold if $f(y)=0$.

\medskip
On the other hand, the size hypothesis on $S$ is necessary. For instance, one may construct small (logarithmic size) sets $S \subseteq [N]$ as in \cite[\S 4.3]{HV} which occupy few residue classes for large moduli just because they are small, but which however have at most one element in each residue class, making the above argument unviable in this situation. Furthermore, it is clear that a similar pathology occurs in higher dimensions, by considering for instance the product set $S \times [N]$. For the general construction of this type of sets and to see that in fact one cannot take $\varepsilon=0$ in Proposition \ref{control} the reader is referred to \S \ref{thin}.

\medskip
In order to deduce Theorem \ref{principal} from Proposition \ref{control} we will need to find a polynomial which vanishes at a specific set of points. This will be accomplished in a standard way by means of Siegel's lemma.

\begin{lema}[Siegel]
Suppose we are given a system of $m$ linear equations
$$ \sum_{j=1}^n a_{ij} \beta_j =0 \text{   } \forall 1 \le i \le m,$$ 
 in $n$ unknowns $(\beta_1, \ldots, \beta_n)$, $n>m$, where the coefficients $(a_{ij})$ are integers not all equal to $0$ and bounded in magnitude by some constant $C$. Then, the above system has a non trivial integer solution $(\beta_1, \ldots, \beta_n)$ with $|\beta_j| \le 1+(Cn)^{m/(n-m)}$ for all $1 \le j \le n$. 
\end{lema}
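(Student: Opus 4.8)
The plan is to give the classical proof by Dirichlet's box principle. Consider the linear map $T \colon \mathbb{Z}^n \to \mathbb{Z}^m$ defined by $T(\beta)_i = \sum_{j=1}^n a_{ij}\beta_j$; finding the desired solution amounts to exhibiting a nonzero lattice point in $\ker T$ with small coordinates. Since the $a_{ij}$ are integers not all zero we may assume $C \ge 1$, and (if $m=0$ there is nothing to prove, so take $m \ge 1$ and hence $n \ge m+1$). Put $B := \lceil (Cn)^{m/(n-m)}\rceil$ and let $\mathcal{B} := \{0,1,\ldots,B\}^n$, a set of $(B+1)^n$ integer vectors.

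The first step is to estimate the image $T(\mathcal{B})$. For a fixed row $i$, as $\beta$ runs over $\mathcal{B}$ the value $\sum_{j} a_{ij}\beta_j$ stays inside an interval of length $B\sum_{j}|a_{ij}| \le BCn$, hence assumes at most $BCn+1$ distinct integer values; therefore $|T(\mathcal{B})| \le (BCn+1)^m$.

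The second step, and the only computation of any substance, is to check that $(B+1)^n > (BCn+1)^m$, so that $T$ fails to be injective on $\mathcal{B}$. Since $Cn \ge 1$ we have $BCn + 1 \le BCn + Cn = (B+1)Cn$, so it suffices to verify $(B+1)^{n-m} > (Cn)^m$; and this holds because, by the choice of $B$, we have $B + 1 > (Cn)^{m/(n-m)}$ and $n - m \ge 1$. Consequently there exist distinct $\beta', \beta'' \in \mathcal{B}$ with $T(\beta') = T(\beta'')$.

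Finally, set $\beta := \beta' - \beta''$. Then $\beta$ is a nonzero integer vector, it satisfies $T(\beta) = 0$, i.e.\ all $m$ equations, and each coordinate $\beta_j = \beta'_j - \beta''_j$ lies in $\{-B,\ldots,B\}$, so $|\beta_j| \le B \le 1 + (Cn)^{m/(n-m)}$, using $\lceil x\rceil \le 1 + x$. This is exactly the solution claimed. There is no genuine obstacle here; the only points demanding a little care are the passage $BCn+1 \le (B+1)Cn$ (which is where $C \ge 1$ is used) and arranging that the final bound reads precisely $1 + (Cn)^{m/(n-m)}$ rather than something marginally larger, both handled by the choice $B = \lceil (Cn)^{m/(n-m)}\rceil$.
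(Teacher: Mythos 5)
The paper states Siegel's lemma without proof, invoking it as a classical result; there is therefore no in-paper argument to compare against. Your pigeonhole proof is the standard one and is correct in all details — the count $|T(\mathcal{B})| \le (BCn+1)^m$, the estimate $BCn+1 \le (B+1)Cn$ (using $C \ge 1$), the reduction to $(B+1)^{n-m} > (Cn)^m$, and the final bound $|\beta_j| \le B \le 1+(Cn)^{m/(n-m)}$ all check out.
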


\begin{proof}[Proof of Theorem \ref{principal} assuming Proposition \ref{control}]
Let the hypothesis be as in the statement of Theorem \ref{principal} and write $h := d-k$. Assume condition (i) fails, so that $|S| \gg N^{d-h-1+\varepsilon}$. We claim that for any given integer $r$ there exists a set $A \subseteq S$ of size $|A|=O_{\eta}(r^{d-h})$ which is $(r,1-\eta)$-characteristic for $S$, provided $N$ is sufficiently large. To see this we begin by noticing that Proposition \ref{control} implies the existence of some $\delta \gg 1$ such that for every subset $S' \subseteq S$ with $|S'| \ge \eta |S|$ there exists a set $A' \subseteq S'$ of size $|A'| = O_{\eta}(r^{d-h})$ which is $(r,\delta)$-characteristic for $S'$. From now on we fix this value of $\delta$. Let $A_0$ be such a characteristic subset for $S$ and let $B_0$ consist of those elements of $S$ which vanish at every $r$-polynomial that vanishes at $A_0$, so in particular $|B_0| \ge \delta |S|$. If $\delta \ge 1-\eta$ we are done, otherwise we have that $S_1 := S \setminus B_0$ satisfies $|S_1| \ge \eta |S|$ and therefore contains a characteristic subset $A_1 \subseteq S_1$ as above. If we now let $B_1$ denote those points of $S_1$ vanishing at every $r$-polynomial that vanishes at $A_1$ we see that either we get the claim with $A = A_0 \cup A_1$ or the set $S_2 := S_1 \setminus B_1$ satisfies $\eta |S| \le |S_2| \le (1-\delta)^2 |S|$. After iterating this process $j$ times we see that if the set $A= \bigcup_{i=0}^{j-1} A_i$ is not $(r,1-\eta)$-characteristic for $S$ then we can find some $S_j \subseteq S$ with $\eta |S| \le S_j \le (1-\delta)^j|S|$. Since this last possibility cannot hold for some large $j=O_{\eta}(1)$, the claim follows.

\medskip
Now it only remains to find some $r$-polynomial $f$ which vanishes at $A$ and which is of the form given in Theorem \ref{principal}. We may assume $d|r$. We thus consider the system of $|A|$ linear equations in $\left( \frac{r}{d}+1 \right)^d$ unknowns given by
\begin{equation}
\label{sistema}
\sum_{{\bf i}=\left\{ i_1, \ldots, i_d \right\} \le r/d} \beta_{\bf i} a^{{\bf i}}=0 \text{   }\forall a \in A,
\end{equation}
where ${\bf i}\le l$ stands for $i_j \le l$ for all $1 \le j \le d$ and where we use the multi-index notation $a^{\bf i} = a_1^{i_1} \ldots a_d^{i_d}$ for $a=(a_1,\ldots,a_d)$. Notice that $|a^{\bf i}| \le N^r$. If we now choose $r=O_{\eta}(1)$ large enough so that $\left( \frac{r}{d}+1 \right)^d > 3|A|$ it follows by Siegel's lemma that there exists an integer solution $(\beta_{\bf i})$ to (\ref{sistema}) with $|\beta_{\bf i}| \ll_r N^{r/2} \le N^r$ provided $N$ is sufficiently large. We thus see that the polynomial $f:= \sum_{{\bf i} \le r/d} \beta_{\bf i}x^{\bf i}$ is of the desired form (assuming again that $N$ is sufficiently large) and, taking $C=r$, this concludes the proof of Theorem \ref{principal}.
\end{proof}

Notice that we have actually proved the following slight strengthening of Theorem \ref{principal} in which the set $S$ is only required to be badly distributed in a dense subset of the primes.

\begin{teo}
\label{refinement}
Let $0 \le k < d$ be integers and let $\varepsilon, \eta > 0$ be positive real numbers. Set $Q=N^{\frac{\varepsilon}{2d}}$ and let $P \subseteq \mathcal{P}(Q)$ satisfy $w(P) \ge \kappa \log Q$ for some $\kappa>0$. Suppose $S \subseteq [N]^d$ is a set of size $|S| \gg N^{k-1+\varepsilon}$ occupying at most $\alpha p^{k}$ residue classes mod $p$ for every prime $p \in P$ and some $\alpha > 0$. Then there exists a polynomial $f \in \mathbb{Z}[x_1, \ldots, x_d]$ of degree $O_{\eta}(1)$ and coefficients bounded by $N^{O_{\eta}(1)}$ which vanishes at more than $(1-\eta) |S|$ points of $S$.
\end{teo}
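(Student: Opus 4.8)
The plan is to observe that the argument already carried out in the proof of Theorem \ref{principal} establishes this stronger statement with essentially no modification, so I would simply retrace that argument and check at each step that the weaker hypothesis suffices. Writing $h := d-k$ as before, the size hypothesis $|S| \gg N^{k-1+\varepsilon}$ is precisely $|S| \gg N^{d-h-1+\varepsilon}$, and the assumption that $S$ occupies at most $\alpha p^k$ residue classes mod $p$ for every $p \in P$, with $P \subseteq \mathcal{P}(Q)$ satisfying $w(P) \ge \kappa \log Q$, is exactly the hypothesis of Proposition \ref{control}. The crucial point is that the hypothesis on $S$ entered the proof of Theorem \ref{principal} \emph{only} through invocations of Proposition \ref{control}; since that proposition is itself stated in terms of a dense subset $P$ of the small primes, nothing beyond what it already provides is needed.

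Concretely, first I would run the same iteration: starting from $S$, repeatedly apply Proposition \ref{control} to produce a $(r,\delta)$-characteristic subset $A_i$ of the current leftover set $S_i$, discard the points of $S_i$ pinned down by the $r$-polynomials vanishing at $A_i$, and continue. The one bookkeeping point to verify is that each leftover set $S_i$ still satisfies the hypotheses of Proposition \ref{control}: it occupies no more residue classes mod $p$ than $S$ does, so at most $\alpha p^k$ classes for every $p \in P$; and while it is still in play it has size $|S_i| \ge \eta |S| \gg N^{k-1+\varepsilon}$, the loss of the factor $\eta$ being harmless since all implied constants are permitted to depend on $\eta$. As $|S_i| \le (1-\delta)^i|S|$ with $\delta \gg 1$, after $O_\eta(1)$ steps the union $A = \bigcup_i A_i$ is $(r,1-\eta)$-characteristic for $S$ and has size $O_\eta(r^{d-h})$.

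Finally I would choose $r = O_\eta(1)$ large enough that $\left(\frac{r}{d}+1\right)^d > 3|A|$, set up the linear system (\ref{sistema}) in the monomial coefficients, and apply Siegel's lemma to obtain a nonzero $f \in \mathbb{Z}[x_1,\ldots,x_d]$ of degree at most $r$ with coefficients $\ll_r N^{r/2} \le N^r$; for $N$ large this is an $r$-polynomial, hence it vanishes on the set $B \supseteq A$ with $|B| \ge (1-\eta)|S|$ witnessing that $A$ is $(r,1-\eta)$-characteristic, which is the conclusion. Since no genuinely new content is involved, I do not expect a real obstacle; the only step requiring care is confirming that passing to a large subset $S_i \subseteq S$ preserves both hypotheses of Proposition \ref{control}, and this is immediate once one recalls that the implied constants may depend on $\eta$.
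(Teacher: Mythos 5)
Your proposal is precisely the paper's own argument: the paper proves Theorem \ref{refinement} by simply observing that the proof of Theorem \ref{principal} already invokes Proposition \ref{control} only under the weaker hypothesis that $S$ is ill-distributed on a dense set of primes $P \subseteq \mathcal{P}(Q)$, and your retracing of the iteration plus the Siegel-lemma step correctly confirms that nothing beyond this is used. No gap; same route.
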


\noindent \emph{Remark.} Since we have already mentioned that the exact value of $Q$ in Proposition \ref{control} is irrelevant, it follows that Theorem \ref{refinement} also holds with $Q$ any small power of $N$.

\section{Applying the larger sieve in high dimensions}
\label{applying ls}

\subsection{A review of the larger sieve}
\label{gallagher sieve}

We will now quickly review some facts about Gallagher's larger sieve and use them to prove two easy lemmas which we shall need later. For further discussion of the larger sieve and its consequences the reader may consult \cite[Section 2.2]{CM} and of course Gallagher's original paper \cite{Gallagher}.

\medskip
Before proceeding we need to state some further notation that will be used in this and the next sections. When studying a set $S \subseteq [N]^d$ we will denote by $[S]_p$ the set of residue classes mod $p$ occupied by $S$. Given such a set $S$, we shall be largely concerned with how many elements of $S$ belong to a given residue class, so it is important for us to have a specific notation for this subset. Thus, given a residue class ${\bf a}=(a_1,\ldots, a_d) (\text{mod }p)$ we write $S({\bf a};p)$ to refer to those elements of $S$ which are congruent to ${\bf a} (\text{mod }p)$. Moreover, we shall sometimes consider some $a \in \mathbb{Z}/p \mathbb{Z}$ and write $S(a;p)$ for those elements of $S$ having their first coordinate congruent to $a$ (mod $p$). Since we will always use the bold font ${\bf a}$ to denote a vector residue class and since where this class lives shall be clear from the context altogether, we believe the similarity of both notations will not cause any confusion. Finally, if $p$ is fixed, we may simply write $S({\bf a})$ and $S(a)$ for the above sets.

\medskip
Fix now a set $S$ and consider some parameter $Q$. The main idea of the larger sieve is to count in two different ways the number of distinct pairs $x,y \in S$ and primes $p \le Q$ such that $x \equiv y (\text{mod }p)$. Given two such integers $x,y \in [N]$ it is clear that those primes for which they are congruent are exactly those dividing $|x-y| \le N$ and therefore
\begin{equation}
\label{Gallagher}
 \sum_{p \le Q} \sum_{\substack{x,y \in S \\ x \neq y}}{\bf 1}_{x \equiv y (\text{mod }p)} \log p \le |S|^2 \log N.
 \end{equation}
On the other hand, we have that the left hand side of (\ref{Gallagher}) equals
\begin{equation}
\label{Gallagher2}
\sum_{p \le Q} \sum_{a (\text{mod }p)} |S(a;p)|^2 \log p - |S| \sum_{p \le Q} \log p.
\end{equation}
Notice that the above argument also works if $S \subseteq [N]^d$ since if $p$ is a prime for which $x \equiv y (\text{mod }p)$ then $p$ must divide $|\pi_1(x)-\pi_1(y)|$ which is bounded by $N$.

\medskip
As an example, we have the following result due to Gallagher \cite{Gallagher}. Suppose we are given a set $S \subseteq [N]$ occupying at most $\alpha p$ residue classes, then the Cauchy-Schwarz inequality implies
$$ \sum_{a (\text{mod }p)} |S(a;p)|^2 \ge \frac{1}{\alpha p}|S|^2.$$
Combining this with (\ref{Gallagher}) and (\ref{Gallagher2}) we obtain
$$ \frac{1}{\alpha} \log Q+O \left( \frac{|Q|}{|S|} \right) \le \log N+O(1).$$
Taking $Q=|S|$ we conclude that $|S| \ll_{\alpha} N^{\alpha}$.

\medskip
For the purposes of this paper, we need to apply Gallagher's sieve in a slightly more general context. Precisely, we will use the following lemma.

\begin{lema}
\label{larger sieve}
Let $X \subseteq [N]$ be some set of integers and set $Q=N^{\gamma}$ for some $\gamma > 0$. Let $c_1,c_2 > 0$ be positive real numbers. Suppose there is a set of primes $P \subseteq \mathcal{P}(Q)$ with $w(P) \ge c_1 \log Q$ such that for every $p \in P$ there are at least $c_2 |X|$ elements of $X$ lying in at most $\alpha p$ residue classes for some $\alpha > 0$ independent of $p$. Then, if $\alpha$ is sufficiently small in terms of $c_1,c_2$ and $\gamma$, it must be $|X|<Q$. 
\end{lema}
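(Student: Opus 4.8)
The plan is to apply Gallagher's larger sieve to a dense subset of $X$ that witnesses the hypothesis. For each $p \in P$ fix a set $X_p \subseteq X$ with $|X_p| \ge c_2 |X|$ occupying at most $\alpha p$ residue classes mod $p$, which exists by assumption. The idea is to bound from below, prime by prime, the number of congruent pairs in $X$ by restricting attention to $X_p$, and then to play this off against the trivial upper bound (\ref{Gallagher}).

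Concretely, I would first reproduce the Cauchy--Schwarz step from the example following (\ref{Gallagher2}): for each $p \in P$, since $X_p$ meets at most $\alpha p$ classes,
$$ \sum_{a (\text{mod }p)} |X_p(a;p)|^2 \ \ge\ \frac{|X_p|^2}{\alpha p} \ \ge\ \frac{c_2^2}{\alpha}\cdot\frac{|X|^2}{p}, $$
so, subtracting the diagonal contribution $|X_p| \le |X|$ and using $X_p \subseteq X$,
$$ \sum_{\substack{x,y \in X \\ x \neq y}} {\bf 1}_{x \equiv y (\text{mod }p)} \ \ge\ \frac{c_2^2}{\alpha}\cdot\frac{|X|^2}{p} - |X|. $$
Multiplying by $\log p$, summing over $p \in P$, and invoking $w(P) \ge c_1 \log Q$ together with the standard estimate $\sum_{p \in P}\log p \le \sum_{p \le Q}\log p \ll Q$, the portion of the left-hand side of (\ref{Gallagher}) supported on $P$ is at least
$$ \frac{c_2^2}{\alpha}|X|^2\, w(P) - |X|\sum_{p \in P}\log p \ \ge\ \frac{c_1 c_2^2 \gamma}{\alpha} |X|^2 \log N - O(|X|Q), $$
using $\log Q = \gamma \log N$. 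Comparing with (\ref{Gallagher}) and dividing by $|X|$ then yields
$$ \left( \frac{c_1 c_2^2 \gamma}{\alpha} - 1 \right) |X| \log N \ \ll\ Q. $$

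Finally I would choose $\alpha$ small in terms of $c_1,c_2,\gamma$ — so small that $\frac{c_1 c_2^2 \gamma}{\alpha} - 1$ exceeds the absolute constant implicit in $\sum_{p\le Q}\log p \ll Q$ — which gives $|X| \le Q/\log N < Q$ for $N$ sufficiently large, as required. I do not anticipate a genuine obstacle; the only point deserving attention is that the diagonal error term $|X|\sum_{p\in P}\log p$ is of size of order $|X|Q$, so it cannot be absorbed into the main term and must instead be compared against the quadratic quantity $|X|^2\log N$. This is precisely why the conclusion takes the threshold form $|X|<Q$ rather than a power saving $|X| \ll N^{\beta}$ with $\beta<1$. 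Apart from tracking the quantifier on $\alpha$, this is a routine instance of the larger sieve.
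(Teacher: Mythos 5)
Your argument is correct and is essentially the proof given in the paper: the same Gallagher-type double count of congruent pairs weighted by $\log p$, the same Cauchy--Schwarz lower bound $\sum_a |X_p(a;p)|^2 \ge |X_p|^2/(\alpha p)$ arising from the $\alpha p$-class hypothesis on the dense subset $X_p$, and the same comparison against the trivial upper bound $|X|^2\log N$. The only difference is that you spell out the final numerical step (arriving at $|X|\ll Q/\log N$, hence $|X|<Q$) a little more explicitly than the paper, which simply asserts the conclusion once the two bounds are in place.
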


\begin{proof}
Again, we count the number of pairs $x,y \in X$ and $p \in P$ with $x \equiv y (\text{mod }p)$. On one hand, we have as before that
\begin{equation}
\label{gala}
\sum_{p \in P} \sum_{\substack{x,y \in X \\ x \neq y}}{\bf 1}_{x \equiv y (\text{mod }p)} \log p \le |X|^2 \log N.
\end{equation}
On the other hand, using the Cauchy-Schwarz inequality we see that our hypothesis on $X$ implies
$$ \sum_{a (\text{mod }p)} |X(a;p)|^2 \ge \frac{1}{\alpha p}(c_2|X|)^2,$$
from where it follows that the left hand side of (\ref{gala}) is at least 
$$\frac{c_1 c_2^2}{\alpha}|X|^2 \log Q +O ( |Q||X|).$$
It is then clear that if $\alpha$ is sufficiently small, then the only way for (\ref{gala}) to hold is to have $|X|<Q$.
\end{proof}

Finally, we prove the following easy consequence of the larger sieve which already handles the case $d=h$ of Proposition \ref{control}. 

\begin{lema}
\label{finito}
Let $Q=N^{\gamma}$ for some $\gamma > 0$ and let $P \subseteq \mathcal{P}(Q)$ be some set of primes with $w(P) \ge c_1 \log Q$ for some $c_1>0$. Let $S \subseteq [N]^d$ occupy less than $c_2$ residue classes mod $p$ for every prime $p \in P$ and some constant $c_2$. Then $|S|= O_{c_1,c_2,\gamma}(1)$.
\end{lema}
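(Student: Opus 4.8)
The plan is to apply Gallagher's larger sieve directly to $S$, exploiting that the number of occupied residue classes is a fixed constant rather than a growing function of $p$. The only point needing care is that the hypothesis is phrased through the weight $w(P)\ge c_1\log Q$, whereas the sieve inequality (\ref{Gallagher}) and (\ref{Gallagher2}) naturally involves the unweighted sum $\sum_{p\in P}\log p$, so I would first pass between the two.

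I would begin by showing that $L:=\sum_{p\in P}\log p$ satisfies $L\ge N^{c_1\gamma/2}$ once $N$ is large. Splitting $P$ at the threshold $p=L$, the primes $p\le L$ contribute at most $w(\mathcal{P}(L))=\log L+O(1)$ to $w(P)$, while the primes $p>L$ contribute less than $\tfrac{1}{L}\sum_{p\in P}\log p=1$; hence $w(P)\le \log L+O(1)$, and comparing with $w(P)\ge c_1\gamma\log N$ forces $\log L\ge \tfrac{c_1\gamma}{2}\log N$ for $N$ large, which is the claimed bound. (If $N$ is bounded the lemma is trivial, since then $|S|\le (N+1)^d=O(1)$; we may also assume $S\neq\emptyset$.)

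Next I would run the sieve exactly as in (\ref{Gallagher}) and (\ref{Gallagher2}), with the prime set $P$ in place of $\mathcal{P}(Q)$ and with $S\subseteq[N]^d$: for distinct $x,y\in S$ every prime $p$ with $x\equiv y\,(\text{mod }p)$ divides the nonzero integer $\prod_{i:\pi_i(x)\neq\pi_i(y)}(\pi_i(x)-\pi_i(y))$, whose absolute value is at most $N^d$, whence
$$\sum_{p\in P}\sum_{{\bf a}\,(\text{mod }p)}|S({\bf a};p)|^2\log p\;\le\;|S|\sum_{p\in P}\log p\;+\;d|S|^2\log N.$$
Since $S$ meets fewer than $c_2$ residue classes modulo each $p\in P$, the Cauchy--Schwarz inequality gives $\sum_{{\bf a}}|S({\bf a};p)|^2\ge |S|^2/\#[S]_p>|S|^2/c_2$; substituting and dividing by $|S|$ turns the display into $(|S|/c_2-1)\sum_{p\in P}\log p< d|S|\log N$. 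If $|S|\ge 2c_2$ the left side is at least $\tfrac{|S|}{2c_2}\sum_{p\in P}\log p$, forcing $\sum_{p\in P}\log p<2c_2d\log N$, which contradicts $L\ge N^{c_1\gamma/2}$ for $N$ large. Hence $|S|<2c_2=O_{c_2}(1)$.

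I expect the first step to be the only real obstacle: the naive bound $\sum_{p\in P}\log p\ge w(P)$ is far too lossy to beat the factor $\log N$ on the right of the sieve inequality, so one genuinely needs the observation that a set of primes of weight $\gg\log N$ has $\log$-sum polynomial in $N$. The remainder is the standard larger-sieve computation already displayed in the text.
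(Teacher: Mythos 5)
Your proof is correct and follows essentially the same route as the paper's one-line argument: apply the unweighted Gallagher sieve with the Cauchy--Schwarz bound $\sum_{\bf a}|S({\bf a};p)|^2>|S|^2/c_2$ and observe that $\sum_{p\in P}\log p$ grows polynomially in $N$. You have in addition filled in the derivation of $\sum_{p\in P}\log p\gg N^{\Theta(1)}$ from the weight hypothesis $w(P)\ge c_1\log Q$ (which the paper asserts without comment) and, by using the product of the nonzero coordinate differences to get the bound $d\log N$, you have handled the higher-dimensional case a little more carefully than the paper's remark, which only considers the first coordinate.
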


\begin{proof}
Gallagher's sieve implies in this case
$$ \log N \ge \left( \frac{1}{c_2} - \frac{1}{|S|} \right)\sum_{p \in P} \log p \gg \left( \frac{1}{c_2} - \frac{1}{|S|} \right) N^{\gamma c_1},$$
and clearly, for sufficiently large $N$, this can only hold if $|S| \le c_2$.
\end{proof}

\subsection{Genericity}
\label{generic sets}

Our strategy to prove Proposition \ref{control} will be to partition $S$ into many lower dimensional subsets and apply induction. However, the main obstacle we encounter in doing so (and which is not merely a technical issue, as can be seen from the examples in \S \ref{thin}) is the possibility that the resulting subsets are rather independent from each other, in the sense that they do not share many residue classes. If this happens, then the fact that a small polynomial vanishes at one of this subsets will not give us much information about the behavior of this polynomial in the other subsets. However, in order for this to happen it would be necessary for these subsets to occupy very few residue classes and this would imply the existence of too many elements in each subset occupying the same residue class. While with our hypothesis one cannot guarantee that this never happens, the goal of this section is to show that this indeed does not happen on average, which will be sufficient for our arguments.

\medskip
We begin with the following definition.

\begin{defi}[Genericity]
Given a real number $B>0$ and some integer $l>0$ we say that a set $S \subseteq [N]^d$ is $(B,l)$-generic mod $p$ if
$$\frac{|S({\bf a};p)|}{|S|} < \frac{B}{p^l},$$
for every residue class ${\bf a} (\text{mod }p)$.
\end{defi} 

Given a set of primes $P \subseteq \mathcal{P}(Q)$ we shall write $P' \hookrightarrow P$ to mean a subset $P' \subseteq P$ with $w(P') \gg w(P)$. Recall that by our conventions in \S \ref{notation} the implied constants depend on the parameters $d,h,\varepsilon,\kappa$ of Proposition \ref{control}. The rest of this section is devoted to the proof of the following lemma.

\begin{lema}
\label{genericity}
Let $d,h \ge 1$ be arbitrary integers and let $\varepsilon > 0$ be some positive real number. Set $Q=N^{\frac{\varepsilon}{2d}}$ and let $P \subseteq \mathcal{P}(Q)$ satisfy $w(P) \ge \kappa \log Q$ for some $\kappa>0$. Suppose $S \subseteq [N]^d$ is a set of size $|S| \gg N^{d-h-1+\varepsilon}$ occupying at most $\alpha p^{d-h}$ residue classes mod $p$ for every prime $p \in P$ and some $\alpha > 0$. Then there exists $B=O(1)$ and a set of primes $P' \hookrightarrow P$ such that for each $p \in P'$ there is some subset $\mathcal{G}_p(S) \subseteq S$, $|\mathcal{G}_p(S)| \gg |S|$, which is $(B,d-h)$-generic mod $p$.
\end{lema}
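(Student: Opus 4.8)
The plan is to produce the sets $\mathcal G_p(S)$ by the crudest available device: discard from $S$ every residue class ${\bf a}$ modulo $p$ that is \emph{heavy}, meaning $|S({\bf a};p)|\ge \tfrac12 B p^{-(d-h)}|S|$, and let $\mathcal G_p(S)$ be what remains. If the heavy classes carry at most half of $S$, then $|\mathcal G_p(S)|\ge|S|/2\gg|S|$, and since whole classes were removed every surviving class satisfies $|\mathcal G_p(S)({\bf a};p)|<\tfrac12 B p^{-(d-h)}|S|\le B p^{-(d-h)}|\mathcal G_p(S)|$, i.e.\ $\mathcal G_p(S)$ is $(B,d-h)$-generic mod $p$. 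Since a heavy class obeys $|S({\bf a};p)|\le \tfrac{2p^{d-h}}{B|S|}|S({\bf a};p)|^2$, the total heavy mass is $\le \tfrac{2p^{d-h}}{B|S|}\sum_{\bf a}|S({\bf a};p)|^2$, so the lemma will follow once I know $\sum_{\bf a}|S({\bf a};p)|^2\le \tfrac14 B p^{-(d-h)}|S|^2$ for $p$ in a set $P'\hookrightarrow P$; and by Markov's inequality it is enough to choose $B$ a sufficiently large constant and prove the averaged bound
$$\sum_{p\in P}\frac{\log p}{p}\,p^{d-h}\sum_{\bf a}|S({\bf a};p)|^2\ \ll\ |S|^2\,w(P).$$

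Everything therefore rests on this averaged estimate, and this is where the larger sieve of \S\ref{gallagher sieve} enters. Writing $p^{d-h}\sum_{\bf a}|S({\bf a};p)|^2=|S|p^{d-h}+p^{d-h}R_p$, where $R_p$ is the number of ordered pairs $x\neq y$ in $S$ with $x\equiv y\ (\mathrm{mod}\ p)$, the diagonal contribution is negligible: $\sum_{p\le Q}\tfrac{\log p}{p}|S|p^{d-h}\ll|S|Q^{d-h}\le |S|N^{\varepsilon/2}\ll|S|^2$ by the size hypothesis on $S$. For the off-diagonal term I would argue exactly as in the derivation of (\ref{Gallagher}): a nondiagonal congruent pair $(x,y)$ contributes $\log p$ to a prime $p$ only when $p$ divides $\gcd_i(\pi_i(x)-\pi_i(y))$, a nonzero integer of size $\le N$, so $\sum_p\log p\,R_p\le|S|^2\log N\ll|S|^2w(P)$. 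When $d-h=1$ the weight $p^{d-h-1}$ is trivial and this already proves the averaged bound; equivalently, phrased as a contradiction, if genericity failed on a $w$-positive set of primes a pigeonhole over $x\in S$ would give a single $x_0$ whose class modulo $p$ is heavy for all $p$ in some $P_0$ with $w(P_0)\gg w(P)$, whence $\sum_{p\in P_0}\log p\,|S(x_0\ \mathrm{mod}\ p;p)|\ge\tfrac12 B|S|w(P_0)\gg B|S|\log N$, which contradicts the pair count $\sum_{p\le Q}\log p\,|S(x_0\ \mathrm{mod}\ p;p)|\ll|S|\log N$ once $B$ is a large enough constant. This range $d-h=1$ is precisely the reach of the Helfgott--Venkatesh method ($k=1$).

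For $d-h\ge2$ the factor $p^{d-h-1}$ is genuine and the one-dimensional larger sieve no longer suffices: a single heavy class holds only $\asymp B p^{-(d-h)}|S|$ points, too few to be visible after weighting by $p^{d-h-1}\tfrac{\log p}{p}$, and estimating $R_p$ by bounding the size of a residue class is far too wasteful — that bound is sharp only when $S$ is extremely concentrated modulo $p$, which the hypotheses forbid for all but a $w$-negligible set of $p$. Here I would induct on $d$ with $h$ fixed; the base case $d=h+1$ is the codimension-one case just treated, and $d\le h$ reduces to Lemma \ref{finito}. For the inductive step I would slice $S$ along the first coordinate modulo $p$: the slice $S(a;p)$ occupies $c_{p,a}$ classes in the remaining $d-1$ coordinates with $\sum_a c_{p,a}=|[S]_p|\le\alpha p^{d-h}$, so on average over $a$ a slice occupies $\le\alpha p^{(d-1)-h}$ classes, which is the $(d-1)$-dimensional instance of the lemma with the same $h$; one then feeds the resulting $(B',d-h-1)$-genericity of the typical slice into a first-coordinate application of the larger sieve to reassemble $(B,d-h)$-genericity of $S$.

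The main obstacle — and where essentially all of the difficulty lies — is that slicing is done \emph{per prime}, whereas both the hypothesis and the conclusion concern a $w$-dense family of primes \emph{simultaneously}: a fixed slice need not occupy few classes modulo any individual $p$ (one controls the counts $c_{p,a}$ only on average over $a$), a slice cut out by a single congruence is not literally a point of a $(d-1)$-dimensional box, and one must also ensure that the slices passed to the inductive hypothesis are large, of size $\gg N^{(d-1)-h-1+\varepsilon}$. Overcoming this calls for a two-parameter averaging — over slices and over primes, weighted simultaneously by $\log p/p$ and by slice size — to extract one large subset of $S$ together with one $w$-dense set of primes for which the lower-dimensional hypotheses genuinely hold; carrying out this double counting cleanly is the technical heart of the proof.
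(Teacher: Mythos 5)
Your opening reduction — truncate the heavy classes and reduce genericity to a second-moment bound $\sum_{\bf a}|S({\bf a};p)|^2 \ll B^{-1}p^{-(d-h)}|S|^2$ for a $w$-dense set of $p$, via Markov — is sound, and your diagnosis of why the bare larger sieve fails for $d-h\ge 2$ is exactly right: the weight $p^{d-h-1}\log p$ is too large for the pair-count $\sum_p\log p\,R_p\le|S|^2\log N$ to be useful, so an induction on $d$ is indeed needed. But you stop exactly at the point where the work begins: you announce that the inductive step requires a ``two-parameter averaging'' without saying what it is, and the slicing you do sketch — by the residue class of the first coordinate modulo $p$ — cannot work as described, for the reason you yourself flag: the slices $S(a;p)$ change with $p$, are not subsets of a $(d-1)$-dimensional box, and only control the class-counts $c_{p,a}$ on average over $a$, so there is no way to invoke a $(d-1)$-dimensional instance of the lemma on them.

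The missing idea, which the paper uses, is to slice by the \emph{actual integer value} of a well-chosen coordinate, not by its residue class. Concretely: a pigeonhole argument first locates a coordinate $i$ (say $i=1$) and a subset $S'\subseteq S$ of density $\gg 1$ such that $|\pi_1(A)|\ge Q$ for every $A\subseteq S'$ with $|A|\ge|S'|/2$ (if no coordinate had this property, $S$ would be trapped in a box of size $<Q^d\ll N^{\varepsilon/2}$, contradicting the size hypothesis). One then works with the fixed fibres $S'_x=\pi_1^{-1}(x)\cap S'$, which genuinely are $(d-1)$-dimensional sets. For each prime $p$, one defines a bad set $\mathcal E(p)\subseteq\mathbb Z/p\mathbb Z$ of first-coordinate classes $a$ where either $|[S(a;p)]_p|$ is too large (more than $B_1p^{d-h-1}$, which can happen for at most $\alpha p/B_1$ classes) or $|S(a;p)|$ is too large (similarly $\le\alpha p/B_1$ classes). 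Since $|\mathcal E(p)|\ll p/B_1$, Lemma \ref{larger sieve}, applied to $\pi_1(S')\subseteq[N]$, shows that once $B_1$ is large enough, fewer than $Q$ integers $x$ can lie in $\mathcal E(p)$ for $w$-many $p$; by the regularity of the first coordinate, one may discard those $x$ at negligible cost. For each surviving $x$ there is then a set of primes $P_x\hookrightarrow P$ on which $S'_x$ occupies $\le B_1 p^{(d-1)-h}$ classes, so the induction hypothesis applies and yields $(B_2,d-h-1)$-generic subsets $\mathcal G_p(S'_x)$. A final pigeonhole over $x$ and $p$ extracts a single $P'\hookrightarrow P$ so that, for each $p\in P'$, the union $\mathcal G_p(S):=\bigcup_x\mathcal G_p(S'_x)$ is dense and $(B,d-h)$-generic, the last point using both the fibrewise $(B_2,d-h-1)$-genericity and the $\mathcal E_2(p)$-exclusion to control how many elements share a given first coordinate modulo $p$. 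Without the device of slicing by the true integer coordinate and running the larger sieve on $\pi_1(S')\subseteq[N]$ to purge the bad first-coordinate classes, the double counting you allude to has no framework to live in, and the inductive step does not close.
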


{\noindent \emph{Remarks.}} Here again the exact value of $Q$ is not important as long as it is a small power of $N$. Also, as in the previous statements, all the hypothesis are necessary because of the examples in \S \ref{thin}.

\begin{proof}
From now on fix an integer $h \ge 1$. If $d<h$ the result is trivial, while for $d=h$ it follows from Lemma \ref{finito} with $B=|S|$ and $P'=P$. We will proceed by induction on $d$. Thus, let $d \ge h+1$ be some integer and assume the result holds for every smaller dimension.

\medskip

Take $S$ and $P$ as in the statement and recall that $\pi_i(S)$ is the projection of $S$ to the $i$th coordinate. We claim that for some $1 \le i \le d$ there exists a set $S' \subseteq S$ with $|S'| \ge |S|/2^d$ such that every $A \subseteq S'$ with $|A| \ge |S'|/2$ satisfies $|\pi_i(A)| \ge Q$. Indeed, if the claim fails with $S' =S$ and $i=1$ we may find some subset $S_1 \subseteq S$ with $|S_1| \ge |S|/2$ and $|\pi_1(S_1)|<Q$. Then, if the claim fails again with $S'=S_1$ and $i=2$, we get some $S_2 \subseteq S_1$ with $|S_2| \ge |S_1|/2 \ge |S|/4$ and $|\pi_1(S_2)|,|\pi_2(S_2)|<Q$. Iterating this $d$ times either we get the claim or end up with a set $S_d \subseteq S$ satisfying
$$ |S| \le 2^d |S_d| \le 2^d |\pi_1(S_d)| \ldots |\pi_d(S_d)| < 2^d Q^d.$$
By our choice of $Q$ this is clearly absurd for sufficiently large $N$ and therefore the claim follows. 

\medskip
Since it sufficies to prove the lemma for such a subset $S'$ we may assume without lost of generality that $S'=S$ and permuting the coordinates if necessary we may also assume $i=1$. Hence, we have that 
\begin{equation}
\label{assumption}
|\pi_1(A)| \ge Q \text{ for every }A \subseteq S \text{ with }|A| \ge |S|/2.
\end{equation} 

\medskip
We wish to construct a dense subset of $S$ which is in an adequate position to apply the induction hypothesis. Since we will be working with the first coordinate, given some $a \in \mathbb{Z}/p \mathbb{Z}$, we will write $S(a;p)$ to refer to those elements of $S$ having their first coordinate congruent to $a(\text{mod }p)$. Let $B_1$ be some large constant to be specified later. Since $ \left|[S]_p \right| \le \alpha p^{d-h}$, it is clear that there can be at most $\alpha p /B_1$ residue classes $a \in [\pi_1(S)]_p \subseteq \mathbb{Z}/p \mathbb{Z}$ for which $\left|[S(a;p)]_p \right| \ge B_1 p^{d-h-1}$. We denote by $\mathcal{E}_1(p)$ this exceptional set. Also, we write
$$ \mathcal{E}_2(p) := \left\{ a \in [\pi_1(S)]_p :  |S(a;p)| \ge \frac{B_1}{\alpha p}|S| \right\}.$$
From the obvious fact that $\sum_{a \in \mathbb{Z}/p \mathbb{Z}}|S(a;p)| = |S|$ it follows that $|\mathcal{E}_2(p)| \le \alpha p/B_1$ and therefore $|\mathcal{E}(p)| \le 2 \alpha p / B_1$, where $\mathcal{E}(p) := \mathcal{E}_1(p) \cup \mathcal{E}_2(p)$. By means of the larger sieve we may now deduce that not too many integers in $[N]$ can lie in $\mathcal{E}(p)$ for many $p \in P$. Indeed, consider the set $X$ which consists of all elements $x \in [N]$ for which
$$ \sum_{p \in P} {\bf 1}_{x(\text{mod }p) \in \mathcal{E}(p)} \frac{\log p}{p} \ge \frac{1}{2} w(P).$$
By the pigeonhole principle, one may then find a set of primes $P_1 \subseteq P$ with $w(P_1) \ge \frac{1}{4} w(P)$ and such that $|\bigcup_{a \in \mathcal{E}(p)} X(a;p)| \ge \frac{1}{4} |X|$ for every $p \in P_1$. It then follows from Lemma \ref{larger sieve} that upon choosing $B_1$ sufficiently large, we can ensure that $|X|<Q$.

\medskip
By (\ref{assumption}), we deduce that $\left| S \setminus \pi_1^{-1}(X) \right| \ge \frac{1}{2}|S|$. We may therefore find a subset $S' \subseteq S$ with $|S'| \ge \frac{1}{4} |S|$ which does not intersect $\pi_1^{-1}(X)$ and such that $S_x' := \pi_1^{-1}(x) \cap S'$ satisfies $|S_x'| \gg N^{d-h-2+\varepsilon}$ for every $x \in \pi_1(S')$. Every such $x$ lies outside of $X$ and therefore has associated a set of primes $P_x \hookrightarrow P$ for which $x(\text{mod }p) \notin \mathcal{E}(p)$. Since $\mathcal{E}_1(p) \subseteq \mathcal{E}(p)$, we may apply the induction hypothesis to $S_x'$ for every $x$ to see that there exists sets of primes $P_x' \hookrightarrow P_x$ and constants $c,B_2>0$ independent of $x$, such that for each $p \in P_x'$ there is a $(B_2,d-h-1)$-generic mod $p$ set $\mathcal{G}_p(S_x') \subseteq S_x'$ containing at least $c \left| S_x' \right|$ elements.

\medskip
Since the sets $P_x'$ constructed above satisfy $P_x' \hookrightarrow P$, with the implied constant independent of $x$, we may apply again the pigeonhole principle to locate some set of primes $P' \hookrightarrow P$ and some constant $c>0$, such that for each $p \in P'$ there are at least $c|S'|$ elements $s \in S'$ for which $p \in P_{\pi_1(s)}'$. It thus follows that if for a prime $p \in P'$ we consider the set
$$ \mathcal{G}_p(S) := \bigcup_{x: p \in P_x'} \mathcal{G}_p \left(S_x' \right),$$
then $|\mathcal{G}_p(S)| \gg |S'| \gg |S|$ and $\mathcal{G}_p(S) \cap \pi_1^{-1}(x)=\mathcal{G}_p(S_x')$ is a $(B_2,d-h-1)$-generic set for every $x \in \pi_1(\mathcal{G}_p(S))$. Also, we see that there are at most $\frac{B_1}{\alpha p}|S| \ll \frac{B_1}{\alpha p}|\mathcal{G}_p(S)|$ elements of $\mathcal{G}_p(S)$ having the same first coordinate mod $p$ since by construction it does not lie in $\mathcal{E}_2(p)$. It thus follows that $\mathcal{G}_p(S)$ is a $B$-generic set for some large $B$ depending on $B_1$ and $B_2$ but independent of $p$ and this concludes the proof of Lemma \ref{genericity}.
\end{proof}

\section{The proof of Proposition 2.2}
\label{proof control}

In this section we give a proof of Proposition \ref{control}. As we did in the proof of Lemma \ref{genericity} we will fix an integer $h$ and induct on $d$. Since for $d \le h$ the result is either trivial or follows from Lemma \ref{finito} we may assume $d \ge h+1$ and that the result holds for all smaller dimensions.

\medskip
We are thus given a set $S$ and some positive integer $r$. Our first step will be to find generic sets inside the sections of $S$ for many primes $p$. Proceeding as in the beginning of the proof of Lemma \ref{genericity} we may assume that
\begin{equation}
\label{assumption2}
|\pi_1(A)| \ge Q \text{ for every }A \subseteq S \text{ with }|A| \ge |S|/2.
\end{equation} 
This allows us, at the cost of passing to a subset of half density if necessary, to get the bound 
\begin{equation}
\label{assumption3}
|S_x| \le 2|S|/Q \text{ for every }x \in [N],
\end{equation}
 where $S_x := \pi_1^{-1}(x) \cap S$. Finally we may also assume, again by passing to a subset of half density if necessary,  that $|S_x| \gg N^{d-h-2+\varepsilon}$ for every $x \in \pi_1(S)$.

\medskip
Let $B$ be some large constant. For every prime $p$ we denote by $\mathcal{E}(p)$ the set of residue classes $a \in \mathbb{Z}/p \mathbb{Z}$ for which $|[S(a;p)]_p| \ge B p^{d-h-1}$ (recall that $S(a;p)$ stands for those elements of $S$ having their first coordinate congruent to $a(\text{mod }p)$ and thus $[S(a;p)]_p$ consists of those residue classes in $[S]_p \subseteq (\mathbb{Z}/p \mathbb{Z})^d$ having $a$ as a first coordinate). Since $|\mathcal{E}(p)| \le \alpha p/B$, applying Lemma \ref{larger sieve} as in the proof Lemma \ref{genericity}, we conclude by (\ref{assumption2}) that if $B$ is chosen sufficiently large, we may find some $S' \subseteq S$, $|S'| \gg |S|$, such that for each $x \in \pi_1 (S')$ we have $P_x \hookrightarrow P$, with the implied constant independent of $x$, and where
$$ P_x := \left\{ p \in P : x(\text{mod }p) \notin \mathcal{E}(p) \right\}.$$
This places us in a position in which we can apply the induction hypothesis to each section $S_x'$ of $S'$ to find some $\delta_0 \gg 1$ independent of $x$ such that each $S_x'$ admits a $(r,\delta_0)$-characteristic subset of size $O(r^{d-h-1})$. In particular, we see that at the cost of passing to a subset of $S'$ of density $\delta_0$ if necessary, we may assume that inside each $S_x'$ we can find a set of size $O(r^{d-h-1})$ which is $(r,1)$-characteristic for the whole section. Notice that since we are refining the sections, we still get a bound of the form $|S_x'| \gg N^{d-h-2+\varepsilon}$ for every $x \in \pi_1(S')$. Thus, we may also apply Lemma \ref{genericity} to every such $S_x'$ obtaining sets of primes $P_x' \hookrightarrow P_x$ such that for every $p \in P_x'$ we can find a $(C,d-h-1)$-generic subset $\mathcal{G}_p(S_x) \subseteq S_x'$, $|\mathcal{G}_p(S_x)| \gg |S_x'|$, where $C$ and the implied constants are independent of $p$ and $x$. In particular, we may find some set of primes $P' \hookrightarrow P$ such that for each $p \in P'$ the set 
$$ \mathcal{G}_p(S) := \bigcup_{x: p \in P_x'} \mathcal{G}_p \left(S_x \right),$$
satisfies $|\mathcal{G}_p(S)| \gg |S'| \gg |S|$ and each nonempty section $(\mathcal{G}_p(S))_x$ of $\mathcal{G}_p(S)$ is a $(C,d-h-1)$-generic set.

\medskip
From now on we write $\mathcal{G}_p := \mathcal{G}_p(S)$. The next lemma is crucial as it allows us to find sections of $S$ containing the residue class of many elements of $S$ for many primes $p$.

\begin{lema}
\label{intermedio}
There exists a set $\mathcal{B} \subseteq S'$, $|\mathcal{B}| \gg |S|$, such that for every non empty section $\mathcal{B}_x$ of $\mathcal{B}$ there is a set of primes $P_x \hookrightarrow P' \hookrightarrow P$ with
$$ \left| \left\{ s \in S' : [s]_p \in \left[ \mathcal{B}_x \right]_p \right\} \right| \ge \frac{c|S|}{p},$$
for every $p \in P_x$, where $c>0$ does not depend on $x$ or $p$. 
\end{lema}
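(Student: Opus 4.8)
The plan is to exploit the genericity of the sections $\mathcal{G}_p(S_x)$ obtained above together with the hypothesis that $S$ occupies at most $\alpha p^{d-h}$ residue classes mod $p$. Fix a prime $p \in P'$. The key observation is a counting one: since the section $(\mathcal{G}_p(S))_x$ is $(C, d-h-1)$-generic mod $p$ for every nonempty $x$-section, each individual residue class in $(\mathbb{Z}/p\mathbb{Z})^d$ contains at most a $\tfrac{C}{p^{d-h-1}}$ fraction of $(\mathcal{G}_p(S))_x$, hence a bounded multiple of $|S_x'| p^{-(d-h-1)} \ll |S| Q^{-1} p^{-(d-h-1)}$ elements of $S'$. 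Meanwhile $S'$ occupies at most $\alpha p^{d-h}$ residue classes total, so at most $\alpha p^{d-h}$ values of $[s]_p$ occur as $s$ ranges over $S'$. The point is then that, because each fibre $S_x'$ is forced to \emph{spread out} over $\gg p^{d-h-1}$ distinct residue classes mod $p$ (since otherwise it could not be generic while being nonempty of the claimed size), the residue classes occupied by the various sections must overlap substantially with those occupied by all of $S'$. More precisely: if $\mathcal{B}_x$ is a section with $[\mathcal{B}_x]_p$ occupying $\gg p^{d-h-1}$ classes, then since $S'$ spreads over only $O(p^{d-h})$ classes in total, each of which (in $S'$ as a whole) can by the $\mathcal{E}(p)$-avoidance and the genericity carry only $O(|S|/p^{d-h})$ elements with that first coordinate — wait, this needs care — one concludes that a positive proportion of $S'$ reduces into $[\mathcal{B}_x]_p$.

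In order, the steps I would carry out are: (1) First pass to $\mathcal{B}$. Using $Q$-spreading in the first coordinate (the estimate \eqref{assumption3}) and genericity of the sections, apply the pigeonhole principle over $P'$ to select, for a positive-density-in-weight set of $x \in \pi_1(S')$, a set of primes $P_x \hookrightarrow P'$ along which the $x$-section is simultaneously generic and avoids $\mathcal{E}(p)$; let $\mathcal{B}$ be the union of these good sections, so $|\mathcal{B}| \gg |S|$. (2) Fix $x$ and $p \in P_x$. Since $\mathcal{G}_p(S_x) \subseteq (\mathcal{G}_p(S))_x$ is $(C, d-h-1)$-generic with $\gg |S_x'|$ elements, and $|S_x'| \gg N^{d-h-2+\varepsilon}$, count: the number of distinct residue classes $[\mathcal{B}_x]_p$ must be $\gg p^{d-h-1}$, because each class holds at most $C|\mathcal{G}_p(S_x)|/p^{d-h-1}$ elements and they must account for all of $\mathcal{G}_p(S_x)$. (3) Now use the global hypothesis $|[S']_p| \le \alpha p^{d-h}$ together with $x \notin \mathcal{E}(p)$: the classes in $[\mathcal{B}_x]_p$ are $\gg p^{d-h-1}$ of the $\le \alpha p^{d-h}$ total classes, and by a second counting argument — averaging $|S'(\mathbf{a};p)|$ over the classes, controlled from above by genericity of the ambient set (Lemma \ref{genericity} applied to $S$ itself, which guarantees a $(B,d-h)$-generic dense subset) — one sees that the preimage under reduction mod $p$ of these $\gg p^{d-h-1}$ classes must contain $\gg |S|/p$ elements of $S'$. (4) Relabel the selected $P_x$ and absorb the loss from step (1) into the $\hookrightarrow$ notation.

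The main obstacle I expect is step (3): the genericity we have produced so far is only \emph{within sections} (a $(C,d-h-1)$-generic bound on $(\mathcal{G}_p(S))_x$), whereas to conclude that $\gg p^{d-h-1}$ classes capture $\gg |S|/p$ elements of $S'$ I also need an \emph{upper} bound on how concentrated $S'$ can be on the complementary classes, i.e. a form of genericity for $S'$ as a whole mod $p$. This is exactly what Lemma \ref{genericity} gives — a dense $(B,d-h)$-generic subset $\mathcal{G}_p(S)$ — so the real work is in checking that restricting to this dense generic subset is compatible with all the other refinements (avoiding $\mathcal{E}(p)$, the section-wise genericity, the $(r,1)$-characteristic subsets), which should be routine bookkeeping with the $\hookrightarrow$ and $\gg$ conventions but needs to be done carefully so that all the "positive proportion" constants remain independent of $p$ and $x$. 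A secondary subtlety is ensuring the bound $|S_x'| \gg N^{d-h-2+\varepsilon}$ survives all the passes to half-density subsets, which is already noted in the surrounding text and propagates here without difficulty.
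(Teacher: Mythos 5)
Your step (2) is sound: sectional genericity does force each nonempty section to occupy $\gg p^{d-h-1}$ residue classes mod $p$. But step (3) has a genuine gap, and it is the crux of the lemma. Genericity, whether of the sections or of the ambient set $\mathcal{G}_p(S)$ supplied by Lemma \ref{genericity}, is an \emph{upper} bound on per-class concentration: it says no residue class carries more than $O(|\mathcal{G}_p(S)|/p^{d-h})$ elements, but it gives no lower bound on any particular class. The $\gg p^{d-h-1}$ classes appearing in $[\mathcal{B}_x]_p$ could all be essentially empty in $S'$ outside of the single section $\mathcal{B}_x$ itself. Your proposed patch via an upper bound on the \emph{complementary} classes does not close this: the complement consists of up to $\alpha p^{d-h}$ classes, each carrying up to $B|\mathcal{G}_p(S)|/p^{d-h}$ elements, for a total of $\alpha B|\mathcal{G}_p(S)|$; since $B$ must be taken large for the larger-sieve step to go through and $\alpha$ is fixed by hypothesis, $\alpha B$ can exceed $1$ and the bound is vacuous. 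There is a second, related gap: even with perfectly uniform class weights within the fibre $a = x \pmod p$, one only captures a positive fraction of $\mathcal{G}_p(a)$, and to upgrade this to $\gg |S|/p$ one must additionally know that $a$ is a popular first coordinate, $|\mathcal{G}_p(a)| \gg |\mathcal{G}_p|/p$; avoiding $\mathcal{E}(p)$ gives only an upper bound on $|[S(a;p)]_p|$, not this lower bound on $|S(a;p)|$.

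The paper's proof inverts the logic: rather than taking an arbitrary generic section and hoping the classes it occupies are heavy, it \emph{constructs} $\mathcal{B}$ out of sections that demonstrably share many heavy classes. For each $p \in P'$ and each popular first-coordinate class $a \in \mathcal{R}$ (i.e.\ $|\mathcal{G}_p(a)| \ge \tfrac{1}{2p}|\mathcal{G}_p|$) one iterates the pigeonhole principle inside $[\mathcal{G}_p(a)]_p$ to extract $q \approx p^{d-h-1}/(2C)$ distinct classes ${\bf b}_1,\ldots,{\bf b}_q$, each carrying $\gg |\mathcal{G}_p(a)|/p^{d-h-1}$ elements of $\mathcal{G}_p(a)$. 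Sectional genericity says the union $\mathcal{B}_j$ of sections containing a representative of ${\bf b}_j$ has density $\gg 1$ in $\mathcal{G}_p(a)$, so a popularity (second-moment) argument yields a set $\mathcal{B}[a]$ of density $\gg 1$ in $\mathcal{G}_p(a)$ each of whose elements lies in $\gg q$ of the $\mathcal{B}_j$. Any nonempty section $\mathcal{B}[a]_x$ therefore contains representatives of $\gg q$ of the heavy ${\bf b}_j$, which together carry $\gg |\mathcal{G}_p(a)| \gg |S|/p$ elements of $S'$. Unioning over $a \in \mathcal{R}$ and a final pigeonhole over $p \in P'$ produces $\mathcal{B}$. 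This explicit extraction of heavy shared residue classes is the idea missing from your proposal; without it the counting in step (3) does not go through.
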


\begin{proof}
We begin by fixing a prime $p \in P'$ and considering some residue class $a \in \left[ \pi_1(\mathcal{G}_p) \right]_p$. Since $p$ is fixed we will simply write $\mathcal{G}_p(a)$ to denote those elements of $\mathcal{G}_p$ with first coordinate congruent to $a (\text{mod }p)$. Also, given a class ${\bf b} \in (\mathbb{Z}/p \mathbb{Z})^d$ we write $\mathcal{G}_p({\bf b})$ for those elements of $\mathcal{G}_p$ congruent to ${\bf b} (\text{mod }p)$. By the pigeonhole principle and the fact that by construction of $P'$ it is $|\left[ \mathcal{G}_p(a) \right]_p| \le Bp^{d-h-1}$ it follows that we may find some ${\bf b}_1 \in [\mathcal{G}_p(a)]_p \subseteq (\mathbb{Z}/p \mathbb{Z})^d$ with 
$$|\mathcal{G}_p({\bf b}_1)| \ge |\mathcal{G}_p(a)|/(Bp^{d-h-1}).$$
Consider now the set $\mathcal{B}_1 \subseteq \mathcal{G}_p(a)$ defined by
\begin{equation}
\label{B1}
 \mathcal{B}_1 := \bigcup_{s:[s]_p={\bf b}_1} \left( \mathcal{G}_p \right)_{\pi_1(s)},
 \end{equation}
that is, $\mathcal{B}_1$ is the union of those sections $(\mathcal{G}_p)_x$ in $\mathcal{G}_p$ containing a representative of ${\bf b}_1$. 

\medskip
Since each $(\mathcal{G}_p)_x$ is a $(C,d-h-1)$-generic set, we have that 
$$|(\mathcal{G}_p)_x| \ge \frac{p^{d-h-1}}{C} \left| (\mathcal{G}_p)_x({\bf b}_1) \right|$$
 and therefore
\begin{equation}
\label{Y_b}
|\mathcal{B}_1| \ge \frac{p^{d-h-1}}{C} \left| \mathcal{G}_p({\bf b}_1) \right| \ge \frac{1}{BC}|\mathcal{G}_p(a)|.
\end{equation}
Notice now that since $|\mathcal{G}_p(a)| \ge |\mathcal{B}_1|$ and $|[\mathcal{G}_p(a)]_p| \le Bp^{d-h-1}$, by the first inequality of (\ref{Y_b}) and the pigeonhole principle we may find another residue class ${\bf b}_2 \in [\mathcal{G}_p(a)]_p$ with 
\begin{align*}
|\mathcal{G}_p({\bf b}_2)| &\ge \frac{1}{Bp^{d-h-1}} |\mathcal{G}_p(a) \setminus \mathcal{G}_p({\bf b}_1)| \\
&\ge \frac{1}{Bp^{d-h-1}} \left( 1-\frac{C}{p^{d-h-1}} \right) |\mathcal{G}_p(a)|,
\end{align*}
which is at least $|\mathcal{G}_p(a)|/(2Bp^{d-h-1})$ if $p^{d-h-1} > 2C$. In such a case, if we now define $\mathcal{B}_2$ as in (\ref{B1}), but this time with respect to ${\bf b}_2$, the same reasoning that gives (\ref{Y_b}) implies $|\mathcal{B}_2| \ge \frac{1}{2BC}|\mathcal{G}_p(a)|$. Iterating this process we end up with a sequence ${\bf b} =\left\{ {\bf b}_1, \ldots, {\bf b}_q \right\}$ of residue classes, $q = \lceil{ \frac{p^{d-h-1}}{2C} }\rceil$, satisfying
\begin{align*}
|\mathcal{G}_p({\bf b}_j)| &\ge \frac{1}{Bp^{d-h-1}} \left|\mathcal{G}_p(a) \setminus \bigcup_{i=1}^{j-1} \mathcal{G}_p({\bf b}_i) \right| \\
&\ge \frac{1}{Bp^{d-h-1}} \left( 1-\frac{(q-1)C}{p^{d-h-1}} \right) |\mathcal{G}_p(a)| \\
&\ge \frac{|\mathcal{G}_p(a)|}{2Bp^{d-h-1}},
\end{align*}
and $|\mathcal{B}_j| \ge \frac{1}{2BC}|\mathcal{G}_p(a)|$. In particular, we have that
\begin{equation}
\label{multiplicity}
\sum_{j=1}^q |\mathcal{B}_j| \ge \frac{q}{2BC}|\mathcal{G}_p(a)|.
\end{equation}
Now, we consider the set
$$ \mathcal{B}[a] := \left\{ s \in \mathcal{G}_p(a) : \sum_{j=1}^q{\bf 1}_{s \in \mathcal{B}_j} \ge \frac{q}{4BC} \right\}.$$
Notice that $\mathcal{B}[a]_x := \mathcal{B}[a] \cap \pi_1^{-1}(x)$ equals $(\mathcal{G}_p)_x$ whenever this intersection is not empty. Also, (\ref{multiplicity}) implies 
\begin{equation}
\label{size B[a]}
\left|\mathcal{B}[a] \right| \ge \frac{1}{4BC}|\mathcal{G}_p(a)|.
\end{equation}
We see that $\mathcal{B}[a]$ is very close to what we want,  since if we take any nonempty section $\mathcal{B}[a]_x$ of this set, then there are at least $|\mathcal{G}_p(a)|/(4 B C)^2$ elements $s \in \mathcal{G}(a)$ such that $s \equiv y (\text{mod }p)$ for some $y \in \mathcal{B}[a]_x$.

\medskip
We now let $\mathcal{R} \subseteq [\pi_1(S)]_p$ consist of those residue classes $a \in \mathbb{Z}/p \mathbb{Z}$ with $|\mathcal{G}_p(a)| \ge \frac{1}{2p}|\mathcal{G}_p|$ and write
$$ \mathcal{B}[p] := \left\{ s \in S' : S_{\pi_1(s)}' \cap \mathcal{B}[a] \neq \emptyset \text{   for some }a \in \mathcal{R} \right\}.$$
In other words, $\mathcal{B}[p]$ consists of those sections of $S'$ intersecting $\bigcup_{a \in \mathcal{R}} \mathcal{B}[a]$. In particular, since $\mathcal{B}[p]$ contains the disjoint union $\bigcup_{a \in \mathcal{R}}\mathcal{B}[a]$, we see from (\ref{size B[a]}) and the definition of $\mathcal{R}$ that
$$ |\mathcal{B}[p]| \ge \frac{1}{8 B C} |\mathcal{G}_p| \ge c |S|,$$
for some constant $c$ independent of $p$.

\medskip
Recall now that $w(P') \ge c \log Q$. For an element $s \in S'$ write $P_s'$ for the set of primes $p \in P'$ for which $s \in \mathcal{B}[p]$. It follows from the above paragraph that for an appropriate choice of $c$ the set
\begin{equation}
\label{Y}
 \mathcal{B}:= \left\{ s \in S' : w(P_s') \ge c \log Q \right\},
 \end{equation}
satisfies $|\mathcal{B}| \ge c |S|$. It is easy to check that $\mathcal{B}$ is of the desired form.
\end{proof}

\medskip
To conclude the proof of Proposition \ref{control} we will show that if an $r$-polynomial vanishes at the sections $\mathcal{B}_x$ for $\gg_r 1$ distinct values of $x$, then it must also vanish at a positive proportion of $S$. To this end, we choose $m$ distinct sections of $S'$ having nontrivial intersection with $\mathcal{B}$, where $m=O_r(1)$ is to be specified later. Notice that by (\ref{assumption3}) and Lemma \ref{intermedio} this is always possible provided $N$ is sufficiently large. Call $\mathcal{L}= \left\{ S_{x_1}', \ldots, S_{x_m}' \right \}$ this set of sections. Let $P_{\mathcal{L}}$ consist of those primes $p$ for which there exists a pair of sections $S_{x_i}' \neq S_{x_j}'$ in $\mathcal{L}$ with $[S_{x_i}']_p \cap [S_{x_j}']_p \neq \emptyset$. Given such a pair of sections the fact that $[S_{x_i}']_p \cap [S_{x_j}']_p \neq \emptyset$ implies in particular that $x_i \equiv x_j (\text{mod }p)$. Since $x_i \neq x_j$ this implies that the sum of $\log p$ over such primes is bounded by $\log N$. Thus, we see that 
\begin{equation}
\label{P_L}
 \sum_{p \in P_{\mathcal{L}}} \log p \le {m \choose 2} \log N,
 \end{equation}
and this implies that $w(P_{\mathcal{L}}) \ll_r \log \log N$. 

\medskip
We now consider on $S'$ the function
$$\psi_{\mathcal{L}}(s) := \sum_{p \le Q} {\bf 1}_{\exists x \in \mathcal{L}:s \equiv x (\text{mod }p)} \log p.$$
Thus, $\psi_{\mathcal{L}}(s)$ measures the extent to which the residue classes occupied by $s$ have a representative in $\mathcal{L}$. If we write $P_i$ to denote the set of primes in Lemma \ref{intermedio} corresponding to the section $S_{x_i}' \cap \mathcal{B}$ of $\mathcal{B}$, it follows from this lemma and (\ref{P_L}) that
\begin{align*}
\sum_{s \in S'} \psi_{\mathcal{L}}(s) &\ge  \sum_{i=1}^m \sum_{p \in P_i \setminus P_{\mathcal{L}}}\sum_{s \in S'} {\bf 1}_{\exists x \in S_{x_i}' : s \equiv x (\text{mod }p)}\log p \\
&\ge \sum_{i=1}^m \sum_{p \in P_i \setminus P_{\mathcal{L}}} \frac{c|S|}{p}\log p \\
&\ge m|S| \left( c \log Q +O_r(\log \log N) \right)\\
&\ge c_0 m|S| \log Q,
\end{align*}
for some $c_0 > 0$ and sufficiently large $N$.

\medskip
Set $\delta = \frac{\varepsilon c_0}{4d}$ and suppose there are at most $\delta |S|$ elements $s \in S'$ with $\psi_{\mathcal{L}}(s) \ge 3r \log N$. Since $\psi_{\mathcal{L}}(s) \le m \log N$ for every $s \notin \mathcal{L}$ we conclude that
$$ c_0 m |S| \log Q \le |\mathcal{L}|2Q+ |S| 3r\log N +\delta |S| m \log N,$$
where we used that $\sum_{p\le Q}\log p \le 2Q$ for large $Q$. Hence, by (\ref{assumption3}) we derive that
$$ m \left( \frac{\varepsilon c_0}{2d} - \delta-\frac{4}{\log N} \right) \le 3r.$$
Taking $m=7 r/\delta$ we get a contradiction for sufficiently large $N$. We may therefore assume that the set
$$ A := \left\{ s \in S' : \psi_{\mathcal{L}}(s) \ge 3r \log N \right\},$$
has size $|A| \ge \delta |S|$ for the above choices of $m$ and $\delta$.

\medskip
We will now show that if an $r$-polynomial vanishes at $\mathcal{L}$, then it also vanishes at $A$. Indeed, let $f$ be such a polynomial and let $x \in A$ be arbitrary. By definition, we have $|f(x)|<N^{3r}$. On the other hand, if $p$ is a prime for which there exists some $y \in \mathcal{L}$ with $x \equiv y (\text{mod }p)$, then the fact that $f(y)=0$ implies that $p|f(x)$. But by definition of $A$ the product of all such $p$ is at least $N^{3r}$ so we see that the only way for this to hold is to have $f(x)=0$, which proves our claim.

\medskip
By the induction hypothesis and our construction of $S'$ we know that for each $S_{x_i}' \in \mathcal{L}$ we may find a $(r,1)$-characteristic set of size $O(r^{d-h-1})$. Taking the union of these $m$ sets we have thus found a set of size $O(r^{d-h})$ which is $(r,\delta)$-characteristic for $S$, with $\delta$ as above. This concludes the proof of Proposition \ref{control}.

\section{Ill-distributed sets with no algebraic structure}
\label{thin}

In this section we provide some examples of high dimensional ill-distributed sets possessing no algebraic structure. In particular, we show that the assertion of Theorem \ref{principal} fails when $\varepsilon = 0$. To begin with, we use a slight modification of the construction given in \cite[\S 4.3]{HV} to see that, given any $0 < \eta < 1$, one may construct a subset of $[N]$ of size $\gg (\log N)^{\eta}$ which occupies at most $p^{\eta}$ residue classes for every prime $p$ and which possesses no algebraic structure. Indeed, if $N$ is sufficiently large, we may find some integer $Q$ with $Q < \log N < 2Q$ such that the product of all primes $p \le Q$, say $R$, satisfies $N^{1/4} < R < N$ (this, of course, is very crude). For each prime $p \le Q$ choose $\lfloor p^{\eta} \rfloor$ residue classes. Then, by the Chinese remainder theorem, there are $\sim R^{\eta}$ elements below $R$ belonging to a selected class for every $p \le Q$. Choose $\lfloor (\log N)^{\eta}/2 \rfloor$ of these elements and call this set $X$. Notice that for all primes $p>Q$ we have $p^{\eta}>|X|$ and therefore $X$ occupies at most $p^{\eta}$ residue classes for these primes $p$. Since by construction it also occupies that many classes for all primes $p \le Q$, we get the claim.

\medskip
We now proceed to give some examples of ill-distributed sets with no algebraic structure. The first one already shows that Theorem \ref{principal} is best possible.

\begin{ej}
This follows readily from the above construction. Fix some pair of positive integers $d,h$ with $d \ge h+1$ and consider $h+1$ different sets $X_1, \ldots, X_{h+1}$ constructed as in the previous paragraph with $\eta=1/(h+1)$. If we define the set
$$ S := \left\{ (x_1, \ldots, x_d) \in [N]^d : x_i \in X_i \text{  } \forall 1 \le i \le h+1 \right\},$$
then we have that $|S| \gg N^{d-h-1} \log N$ while $|[S]_p| \le p^{d-h}$ for every prime $p$, from where it follows that we cannot take $\varepsilon = 0$ in Theorem \ref{principal}.
\end{ej}

\begin{ej}
\label{ej 2}
One can generalize the above example by ``perturbing'' arbitrary algebraic sets. We show a simple instance of this. Let $d=3$ and consider two polynomials $f,g \in \mathbb{Z}[x]$. Let $X$ and $Y$ be sets of size $\gg (\log N)^{1/2}$ occupying at most $p^{1/2}$ residue classes for every prime $p$. Then, we see that $$ \left\{ (x,f(x)\cdot X,g(x) \cdot Y ) : x \in [N] \right\}$$ is a big set of integer points occupying at most $p^2$ residue classes. 
\end{ej}

Finally, we show that not all counterexamples are perturbations of strongly algebraic sets.

\begin{ej}
Fix some small $\varepsilon > 0$. By the Chinese remainder theorem one can construct a set $X \subseteq [N]$ of size $|X| \sim N^{1-\varepsilon}$ occupying only one residue class for every prime $p \le \varepsilon \log N$. Take $K= \lfloor (\varepsilon \log N)^{1/3} \rfloor$ and let $f_1, \ldots, f_K,g_1, \ldots, g_K$ be a family of polynomials. Also, let $X_1, \ldots, X_K,Y_1, \ldots, Y_K$ be arbitrary sets of size at most $(\varepsilon \log N)^{1/3}$. Then
$$ \bigcup_{i=1}^K \left\{ (x,f_i(x) \cdot X_i,g_i(x) \cdot Y_i) : x \in X \right\}$$
is a big set of integer points occupying at most $p^2$ residue classes for every prime $p$. Notice that this construction is of a different nature than the one given in Example \ref{ej 2}, since the union of that many algebraic sets hardly retains any algebraic structure itself.
\end{ej}

It follows from the above examples that strange things can happen if one allows the set to possess too many very small sections. However, we shall show in Theorem \ref{regular teo} below that the methods of this paper do indeed work as long as one avoids this type of situations.

\section{Further results and conjectures}
\label{Further}

\subsection{A generalization of Theorem 1.1}
We now state the most general result which follows at once from the methods of this paper. To do this we need the concept of a $(k,\eta,\varepsilon,\rho)$-regular set. When $k=1$ we just take this to mean $|S| \ge N^{\varepsilon}$. Recursively, for any positive integer $k$, we say $S \subseteq [N]^d$ is $(k,\eta,\varepsilon,\rho)$-regular if for any $S' \subseteq S$ of size $|S'| \ge \eta |S|$ we can find a subset $S'' \subseteq S'$, $|S''| \ge \rho|S|$, satisfying 
\begin{equation*}
|\pi_i(A)| \ge N^{\varepsilon} \text{ for every }A \subseteq S'' \text{ with }|A| \ge |S''|/2
\end{equation*}
for some $1 \le i \le d$ and such that $S'' \cap \pi_i^{-1}(x)$, $i$ as before, is either empty or $(k-1,1,\varepsilon,\rho)$-regular for every $x \in [N]$. Although this definition seems complicated, one would expect a reasonable set to satisfy it for an appropriate choice of the parameters. As an example for which we shall give an application below, we notice that given any function $f:[N]^k \rightarrow [N^r]^t$ with $k,r,t$ arbitrary positive integers, the graph of $f$ is a $(k,\eta,1/2r,1/2)$-regular set provided $N$ is large in terms of $\eta$. This definition was chosen so that it satisfies all the assumptions made during the proof of Theorem \ref{principal}. Therefore, we immediately get the following generalization of this result.

\begin{teo}
\label{regular teo}
Let $k>0$ be some integer and $\eta, \varepsilon,\alpha, \rho > 0$ real numbers. Then there exists $C=O_{k,\eta,\varepsilon,\alpha,\rho}(1)$ such that for any $(k,\eta,\varepsilon,\rho)$-regular set $S$ occupying less than $\alpha p^k$ residue classes for every prime $p$ there exists a polynomial $f \in \mathbb{Z}[x_1, \ldots, x_d]$ of degree at most $C$ and coefficients bounded by $N^C$, such that $f$ vanishes at more than $(1-\eta) |S|$ points of $S$.
\end{teo}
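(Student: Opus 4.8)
The plan is to observe that Theorem \ref{regular teo} follows from exactly the same argument used to deduce Theorem \ref{principal} from Proposition \ref{control}, once we check that the notion of $(k,\eta,\varepsilon,\rho)$-regularity is precisely what carries the induction through. First I would re-examine the proof of Proposition \ref{control} to isolate which properties of $S$ were actually used: (a) a lower bound on $|S|$, employed only to guarantee that the relevant sections are nonempty and large enough to keep applying the induction, and (b) the projection hypothesis $|\pi_1(A)| \ge Q$ for every dense $A \subseteq S$, used to derive \eqref{assumption3} and to run the larger-sieve arguments via Lemma \ref{larger sieve}. The recursive definition of a $(k,\eta,\varepsilon,\rho)$-regular set was designed so that (after replacing $Q=N^{\varepsilon/2d}$ by $N^{\varepsilon}$, which is legitimate since the exact power of $N$ is irrelevant) one can always pass to a subset $S'' \subseteq S'$ of density $\ge \rho$ whose sections along a suitable coordinate are themselves $(k-1,1,\varepsilon,\rho)$-regular; this is exactly the structure needed to replace the dimension-reduction step in \S \ref{proof control} and in the proof of Lemma \ref{genericity}.

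The key steps, in order, are as follows. First, restate and reprove the analogue of Proposition \ref{control} for regular sets: given a $(k,\eta,\varepsilon,\rho)$-regular set $S$ occupying at most $\alpha p^k$ residue classes for every prime $p$ (or for a set $P \hookrightarrow \mathcal{P}(Q)$), there is an $(r,\delta)$-characteristic subset of size $O(r^k)$ with $\delta \gg 1$. The induction is now on $k$ rather than on $d-h$: the base case $k=1$ means $|S| \ge N^{\varepsilon}$, which by Gallagher's sieve (as in the worked example in \S \ref{gallagher sieve}, or Lemma \ref{finito} if $\alpha$ is a constant) forces $S$ to be essentially one point or concentrated, handing us a trivial characteristic set; more precisely one argues as in the $d=h$ case of Proposition \ref{control}. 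For the inductive step, the regularity condition directly supplies, for the relevant $S' \subseteq S$, a dense subset whose nonempty sections along some coordinate $i$ are $(k-1,1,\varepsilon,\rho)$-regular, and one reruns the genericity construction of Lemma \ref{genericity} and the section-combining argument of Lemma \ref{intermedio} verbatim, since those lemmas only ever used the projection hypothesis and the fact that the sections inherit the inductive hypothesis. Second, having the characteristic subset, deduce Theorem \ref{regular teo} by the identical iteration-plus-Siegel's-lemma argument given in the proof of Theorem \ref{principal}: peel off dense pieces on which a common $r$-polynomial vanishes until one covers a $(1-\eta)$-proportion, then solve the linear system \eqref{sistema} with $\left(\frac{r}{d}+1\right)^d > 3|A|$ to produce an $f$ of degree $O(1)$ with coefficients bounded by $N^{O(1)}$.

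The main obstacle I expect is bookkeeping rather than a new idea: one must verify carefully that every place in \S \ref{applying ls}–\S \ref{proof control} where the size hypothesis $|S| \gg N^{d-h-1+\varepsilon}$ was invoked can be replaced by the regularity hypothesis without loss. Concretely, the delicate point is the repeated ``pass to a subset of half density'' and ``pass to a subset of density $\delta_0$'' maneuvers: under regularity one passes instead to a subset of density $\ge \rho$, and one must check the constants still compose to a bounded $\delta$ over the $O(1)$ iterations, and that the sections one produces remain $(k-1,1,\varepsilon,\rho)$-regular rather than merely large — this is why the definition fixes the second parameter to $1$ at each recursive stage. A second small subtlety is that, unlike the integer-point setting, here we should phrase the larger-sieve applications (Lemma \ref{larger sieve} and Lemma \ref{finito}) purely in terms of the first coordinate lying in $[N]$, which is already how those lemmas are stated, so no change is needed there. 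Once these verifications are in place, the statement with $C=O_{k,\eta,\varepsilon,\alpha,\rho}(1)$ follows exactly as before, and the remark that the graph of any $f:[N]^k \to [N^r]^t$ is $(k,\eta,1/2r,1/2)$-regular is then immediate from the definition, since each coordinate of such a graph is determined by the first $k$ coordinates which range over all of $[N]^k$.
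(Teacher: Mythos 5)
Your overall plan — isolate the two properties of $S$ actually used in the proof of Theorem~\ref{principal} (a size lower bound and the projection property feeding \eqref{assumption2}, \eqref{assumption3} and the larger-sieve steps), observe that the definition of $(k,\eta,\varepsilon,\rho)$-regularity was crafted to supply precisely these, and then rerun the argument with induction on $k$ in place of $d-h$ before finishing with the same Siegel's-lemma step — is exactly the paper's own (one-sentence) justification, so the approach is the same. There is, however, a slip in your treatment of the base case. A $(1,\eta,\varepsilon,\rho)$-regular set merely satisfies $|S|\ge N^\varepsilon$ and, under the theorem's hypothesis, occupies at most $\alpha p$ residue classes; neither Gallagher's sieve in \S\ref{gallagher sieve} nor Lemma~\ref{finito} makes such a set ``essentially one point,'' and Lemma~\ref{finito} requires a \emph{bounded} number of classes, i.e.\ the $k=0$ (equivalently $d=h$) situation, not $k=1$. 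The case $k=1$ is genuinely not trivial: one must run one full round of the dimension-reduction argument of \S\ref{proof control} (deriving the projection property from $|S|\ge N^\varepsilon$ as at the start of Lemma~\ref{genericity}, which also means $Q$ should remain a sufficiently small power of $N$ such as $N^{\varepsilon/2d}$ rather than $N^\varepsilon$), after which the refined sections $S_x'$ occupy $O(1)$ classes for many $p$ and only then is Lemma~\ref{finito} applicable to bound them, yielding the $(r,\delta)$-characteristic set of size $O(r)$. In other words the true trivial base case is $k=0$, and $k=1$ is the first inductive step. With that correction, the rest of your verification — that the regularity hypothesis replaces the size hypothesis at each ``pass to a dense subset'' stage, that the constants compose over $O(1)$ iterations, and that the graph of $f:[N]^k\to[N^r]^t$ is $(k,\eta,1/2r,1/2)$-regular — is sound.
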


It is important to note that one cannot hope to do much better than Theorem \ref{regular teo} in this generality, since the regularity conditions are necessary in order to avoid those constructions emerging from the Chinese Remainder Theorem as in \S \ref{thin}.

\subsection{Approximate reduction}
\label{Approximate reduction}
We shall give a quick application of Theorem \ref{regular teo} to the study of functions preserving some structure when reduced modulo a prime, that is, functions $f$ for which knowing the class of $x (\text{mod }p)$ gives us information about the class of $f(x) (\text{mod }p)$. Thus, given a positive integer $K$, we say a function $f:[N]^k \rightarrow [N^r]^t$ has $K$-approximate reduction if $\left| \left[f \left([N]^k({\bf a}) \right) \right]_p \right| \le K$ for every ${\bf a} \in \left( \mathbb{Z}/p \mathbb{Z} \right)^k$ and every prime $p$. When $K=1$ this implies the very strong property of \textit{recurrence} mod $p$ and using this, it was shown by Hall \cite{Hall} and Ruzsa \cite{Ruzsa} (see also \cite[\S XV.41]{handbook}) that for large $N$ the only functions having $1$-approximate reduction are polynomials (notice that we are assuming our functions to have polynomial growth, which is in fact a necessary condition \cite{Hall}). It follows from Theorem \ref{regular teo} that this is indeed a very robust phenomenon:

\begin{coro}
Suppose $f:[N]^k \rightarrow [N^r]^t$ has $K$-approximate reduction and let $\Gamma(f)$ be the graph of $f$. Then there exists $C=O_{k,r,t,K}(1)$ and a polynomial $P \in \mathbb{Z}[x_1, \ldots, x_d]$ of degree at most $C$ and coefficients bounded by $N^C$, such that $P$ vanishes at more than $(1-\eta)|\Gamma(f)|$ points of $\Gamma(f)$.
\end{coro}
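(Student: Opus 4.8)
The plan is to derive this as a direct application of Theorem \ref{regular teo}, so the only work is to verify that the graph $\Gamma(f)$ of a function with $K$-approximate reduction satisfies both hypotheses of that theorem for appropriate parameters. First I would record that $\Gamma(f) \subseteq [N]^{k+t}$ is the graph of the map $f:[N]^k \to [N^r]^t$, so by the example preceding Theorem \ref{regular teo} it is a $(k,\eta,1/2r,1/2)$-regular set, provided $N$ is large in terms of $\eta$; this gives the regularity hypothesis with $d=k+t$. Second, I would translate the $K$-approximate reduction condition into a bound on $|[\Gamma(f)]_p|$. For a prime $p$ and a class ${\bf a}\in(\mathbb{Z}/p\mathbb{Z})^k$, the points of $\Gamma(f)$ whose first $k$ coordinates reduce to ${\bf a}$ are $\{(x,f(x)): x\in[N]^k({\bf a})\}$, and their reductions mod $p$ all have first $k$ coordinates equal to ${\bf a}$ and last $t$ coordinates lying in $[f([N]^k({\bf a}))]_p$, a set of size at most $K$ by hypothesis. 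Summing over the $p^k$ choices of ${\bf a}$ gives $|[\Gamma(f)]_p| \le K p^k$ for every prime $p$, i.e. $\Gamma(f)$ occupies at most $\alpha p^k$ residue classes with $\alpha = K$.

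With these two facts in hand, Theorem \ref{regular teo}, applied with this value of $d=k+t$, with $k$ the exponent appearing in the sieve condition, with $\varepsilon = 1/2r$, $\rho = 1/2$, and $\alpha = K$, immediately produces a constant $C = O_{k,t,\eta,r,K}(1)$ (absorbing the fixed choices of $\varepsilon,\rho$ into the dependence on $k,r$) and a polynomial $P \in \mathbb{Z}[x_1,\ldots,x_{k+t}]$ of degree at most $C$ with coefficients bounded by $N^C$ vanishing at more than $(1-\eta)|\Gamma(f)|$ points of $\Gamma(f)$. This is exactly the assertion of the corollary, so the proof concludes here.

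There is essentially no hard step: the content is entirely bookkeeping, matching the notation $[N]^k({\bf a})$ of the $K$-approximate reduction definition against the residue-class counting in Theorem \ref{regular teo}, and invoking the stated regularity of graphs of polynomially-bounded functions. The one place to be slightly careful is the small-$N$ regime — the regularity of $\Gamma(f)$ requires $N$ large in terms of $\eta$, and likewise Theorem \ref{regular teo} is only meaningful for $N$ large — but this is harmless since for bounded $N$ one may take $P$ to be any nonzero polynomial of bounded degree vanishing on all of $[N]^{k+t}$ (for instance a product of linear factors $\prod_{j=0}^{N}(x_1 - j)$, whose degree and coefficient size are then $O_N(1)$), so the statement holds trivially with $C$ depending also on $N$ in that range; alternatively one absorbs this into the implied constant. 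Thus the estimate is uniform in the stated parameters for all sufficiently large $N$, which is the intended reading.
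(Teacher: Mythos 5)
Your proof is correct and follows exactly the route the paper intends: the paper states the corollary as an immediate consequence of Theorem \ref{regular teo} together with the observation (made just before that theorem) that $\Gamma(f)$ is $(k,\eta,1/2r,1/2)$-regular, and you supply the one small piece of bookkeeping the paper leaves implicit, namely that $K$-approximate reduction gives $|[\Gamma(f)]_p| \le K p^k$ by summing the $\le K$ occupied classes in each fiber over the $p^k$ choices of ${\bf a} \in (\mathbb{Z}/p\mathbb{Z})^k$. The only cosmetic discrepancy is that the paper's corollary writes $C = O_{k,r,t,K}(1)$ without listing $\eta$; since Theorem \ref{regular teo} produces a constant depending on $\eta$, your version $C = O_{k,r,t,K,\eta}(1)$ is the correct reading, and one should also recall that the ambient box is $[N^r]^{k+t}$ rather than $[N]^{k+t}$, which just rescales $C$ by a factor of $r$ when converting the coefficient bound $(N^r)^{C'}$ to the form $N^C$.
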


\subsection{The Inverse Sieve Problem for $d=1$}
\label{d=1}
We conclude by mentioning a very strong version of the inverse sieve problem which is conjectured to hold for sets $S \subseteq [N]$ (see \cite[Problem 7.2]{CL} and \cite{HV}).

\begin{conj}
\label{uno}
Suppose $S \subseteq [N]$ is some set of integers of size $|S| \ge N^{\varepsilon}$ occupying less than $\alpha p$ residue classes for some $0 < \alpha < 1$ and every prime $p$. Then most of $S$ is contained in the image of an integer polynomial of degree bounded in terms of $\alpha$ and $\varepsilon$.
\end{conj}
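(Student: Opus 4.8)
The plan is to combine the size bound coming from Gallagher's larger sieve with a $p$-adic determinant argument; I should say at the outset, though, that this conjecture is (to my knowledge) open and lies outside the reach of the method of this paper. The first step is the baseline already implicit in \S\ref{gallagher sieve}: counting pairs $x\equiv y\ (\mathrm{mod}\ p)$ against the primes dividing $|x-y|\le N$ and feeding the hypothesis $|[S]_p|<\alpha p$ into Cauchy--Schwarz reproduces Gallagher's estimate \cite{Gallagher}, so $|S|\ll_{\alpha}N^{\alpha}$. As $\alpha<1$, the set $S$ is already polynomially sparse, of a size compatible with sitting inside the value set of a fixed polynomial; since the value set of a degree-$r$ polynomial modulo $p$ has cardinality of order $p/r$, it is the occupancy hypothesis that should pin down the degree $r\asymp 1/\alpha$ occurring in the statement. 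Hence the whole content is the production of \emph{algebraic structure}, the cardinality aspect being routine.

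The natural attack is to imitate the case $d=2$ of Helfgott and Venkatesh \cite{HV}. One would like to pass from $S$ to an auxiliary planar set --- the graph $\{(t,g(t))\}$ of the conjectural parametrizing polynomial $g$, or a monomial lift $\{(x,x^{2},\ldots,x^{r}):x\in S\}$ --- so that ``a positive proportion of $S$ lies in the image of a degree-$r$ polynomial'' is recast as ``the auxiliary set is largely contained in a low-degree curve'', and then force the points onto such a curve by the Bombieri--Pila determinant method \cite{BP}. The obstruction is that we are only granted $|S|\gg N^{\varepsilon}$, which may fall well below the threshold $\sim N^{1/r+\varepsilon'}$ at which the bare determinant bound bites, so the congruence information has to be built into the argument: one wants a determinant estimate in which, for many primes $p$, the relevant Vandermonde-type determinants are divisible by a large power of $p$ because $S$ meets so few classes mod $p$, in the spirit in which reductions to primes are used to economize in global determinant arguments.

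The fundamental obstacle --- and precisely why the approach here breaks for $d=1$ --- is the loss of the inductive structure driving Proposition \ref{control}: in dimension one the sections $\pi_i^{-1}(x)\cap S$ are single points, so the genericity and uniformization lemmas of \S\ref{applying ls} have nothing on which to operate, and, as stressed in the introduction, the larger sieve in isolation is a counting device that cannot tell an arbitrary sparse set occupying few classes apart from the image of a polynomial. Getting past this would seem to demand a genuinely new mechanism that reads \emph{algebraic}, rather than merely \emph{cardinality}, information out of the family $\{[S]_p\}_{p}$ --- for instance a classification of when a compatible family of small subsets $[S]_p\subseteq\mathbb{Z}/p\mathbb{Z}$ must coincide with a large piece of the value set of a single low-degree polynomial modulo $p$, a question closely tied to the exceptional sets in Weil-type equidistribution. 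I would take as the first concrete target the range $\alpha\in(\tfrac12,1)$, aiming to place most of $S$ in the image of a quadratic, and I expect the step that manufactures the polynomial relation from the purely local data to be where essentially all the difficulty lies.
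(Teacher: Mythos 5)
You have correctly diagnosed the situation: the statement you were asked to prove is \emph{not} a theorem of the paper but a stated open conjecture (labelled Conjecture \ref{uno} in \S\ref{d=1}), attributed to Croot--Lev and Helfgott--Venkatesh, and the paper itself offers no proof --- only a discussion of why it appears difficult. Your assessment of the obstructions matches the paper's own: the dimensional induction driving Proposition \ref{control} slices $S$ into sections $\pi_i^{-1}(x)\cap S$ and exploits that these are themselves ill-distributed sets in one fewer variable, and when $d=1$ the sections degenerate to singletons so Lemma \ref{genericity} and the characteristic-set machinery have no purchase. You are also right that Gallagher's larger sieve yields only the cardinality bound $|S|\ll_\alpha N^\alpha$, which is purely a counting statement and cannot distinguish polynomial images from, say, the Chinese-remainder constructions of \S\ref{thin}; the paper stresses exactly this point when explaining why the larger sieve ``in essence a counting argument'' does not by itself produce algebraic structure.

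For completeness, the paper's \S\ref{d=1} adds two further pieces of evidence that the conjecture is genuinely hard, which you may wish to note: Green's result that if the occupied classes avoid an interval of length $(p-1)/2$ then $|S|\ll_\varepsilon N^{1/3+\varepsilon}$, with improvements seemingly requiring the exponent pair conjecture; and the observation of Helfgott--Venkatesh that Conjecture \ref{uno} would imply $\ll_\varepsilon N^\varepsilon$ integer points on an irrational curve, itself a long-standing open problem. So your ``proof'' is, correctly, a non-proof: identifying the statement as open, locating the precise failure of the paper's method at $d=1$, and sketching what new ingredient (reading algebraic rather than cardinality information out of the family $\{[S]_p\}_p$) would be needed is exactly the right answer here.
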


As a more precise instance of this, they conjecture for example that if a set $S$ has size $|S| \ge N^{0.49}$ say, and occupies less than $2p/3$ residue classes mod $p$ for every prime $p$, then most of $S$ must be contained in a set of the form $\left\{ an^2+bn+c : n \in \mathbb{Z} \right\}$. This can be seen as an inverse conjecture for the large sieve \cite{Green,Montgomery}.

\medskip
Conjecture \ref{uno} seems to be hard. For example, it was shown by Green that if the residue classes occupied by $S$ lie outside some interval of length $(p-1)/2$ then $|S| \ll_{\varepsilon} N^{1/3+\varepsilon}$ for any $\varepsilon > 0$. However, even in this  particular case, to get a bound of the form $|S| \ll_{\varepsilon} N^{\varepsilon}$, it seems necessary to appeal to very deep conjectures of analytic number theory like the exponent pair conjecture (see \cite{Green}). Furthermore, as noted by Helfgott and Venkatesh \cite[\S 4.2]{HV}, Conjecture \ref{uno} implies that there are $ \ll_{\varepsilon} N^{\varepsilon}$ points on an irrational curve, which is itself a well known open problem.  

\bigskip


\begin{thebibliography}{25}
\bibitem{BP} E. Bombieri and J. Pila, \emph{The number of integral points on arcs and ovals}, Duke Math. J. {\bf 59} (1989), 337-357.
\bibitem{CM} A. C. Cojocaru and M. R. Murty, \emph{Introduction to Sieve Methods and Their Applications}.
London Mathematical Society Student Texts, Cambridge University Press, 2005.
\bibitem{CL} E. Croot and V. F. Lev, \emph{Open problems in additive combinatorics}, in Additive Combinatorics, CRM Proc. Lecture Notes {\bf 43}, 207-233, Amer. Math. Soc., Providence, RI, 2007.
\bibitem{Green survey} B. J. Green, \emph{Approximate groups and their applications: work of Bourgain, Gamburd, Helfgott and Sarnak}, Current Events Bulletin of the AMS, 2010.
\bibitem{Green} B. J. Green, \emph{On a variant of the large sieve}, preprint, \href{http://arxiv.org/abs/0807.5037}{arXiv:0807.5037}.
\bibitem{GR} B. J. Green and I. Z. Ruzsa, \emph{Freiman's theorem in an arbitrary abelian group}, J. Lond. Math.
Soc. (2) {\bf 75} (2007), no. 1, 163-175.
\bibitem{GTZ} B. J. Green, T. Tao and T. Ziegler, \emph{An inverse theorem for the Gowers $U^{s+1}[N]$-norm}, preprint, \href{http://arxiv.org/abs/1009.3998}{arXiv:1009.3998}
\bibitem{Gallagher} P. X. Gallagher, \emph{A larger sieve}, Acta Arith. {\bf 18} (1971), 77-81.
\bibitem{Hall} R. R. Hall, \emph{On pseudopolynomials}, Mathematika {\bf 18} (1971), 71-77.
\bibitem{Helfgott} H. A. Helfgott, \emph{Growth and generation in $SL_2(\mathbb{Z}/p \mathbb{Z})$}, Ann. of Math. (2) {\bf 167} (2008), no. 2,
601-623.
\bibitem{HV} H. A. Helfgott and A. Venkatesh, \emph{How small must ill-distributed sets be?}, Analytic number theory. Essays in honour of Klaus Roth. Cambridge University Press, 2009, 224-234.
\bibitem{Kowalski} E. Kowalski, \emph{The ubiquity of surjective reduction in random groups}, notes available at \href{http://www.math.ethz.ch/~kowalski/notes-unpublished.html}{http://www.math.ethz.ch/~kowalski/notes-unpublished.html}.
\bibitem{Kowalski book} E. Kowalski, \emph{The large sieve and its applications: arithmetic geometry, random walks and discrete
groups}. Cambridge Tracts in Math. 175, Cambridge University Press, 2008.
\bibitem{Montgomery} H. L. Montgomery, \emph{A note on the large sieve}, J. London Math. Soc. {\bf 43} (1968), 93–98.
\bibitem{Ruzsa} I. Z. Ruzsa, \emph{On congruence preserving functions}, Mat. Lapok. {\bf 22} (1971), 125-134.
\bibitem{handbook} J. Sándor, D. S. Mitrinovic and B. Crstici, \emph{Handbook of number theory I}, Springer, 2006.
\bibitem{Tao} T. Tao, \emph{Freiman's theorem for solvable groups}, to appear in Cont. Disc. Math..
\bibitem{TV book} T. Tao and V. Vu, \emph{Additive combinatorics}, Cambridge Studies in Advanced Mathematics,
105. Cambridge University Press, Cambridge, 2006.
\bibitem{TV} T. Tao and V. Vu, \emph{Inverse Littlewood-Offord theorems and the condition number of random matrices}, Ann. of Math. (2) {\bf 169} (2009), no. 2, 595-632.
\end{thebibliography}
\end{document}